\documentclass{llncs}

\usepackage{tikz}
\usepackage{enumerate,amsmath,amscd,amssymb}
\usetikzlibrary{arrows}

\newcommand{\lang}[1]{\mathcal{L}(#1)}
\newcommand{\Z}{\mathbb{Z}}
\newcommand{\abs}[1]{\vert #1 \vert}
\newcommand{\IDEMP}{\mathcal{IDEMP}}

\begin{document}

\title{A Characterization of Cellular Automata Generated by Idempotents on the Full Shift\thanks{Research supported by the Academy of Finland Grant 131558}}

\titlerunning{Cellular Automata Generated by Idempotents}

\author{Ville Salo\inst{1}}

\institute{University of Turku, Finland, \\
\email{vosalo@utu.fi}
}

\maketitle

\begin{abstract}
In this article, we discuss the family of cellular automata generated by so-called idempotent cellular automata (CA $G$ such that $G^2 = G$) on the full shift. We prove a characterization of products of idempotent CA, and show examples of CA which are not easy to directly decompose into a product of idempotents, but which are trivially seen to satisfy the conditions of the characterization. Our proof uses ideas similar to those used in the well-known Embedding Theorem and Lower Entropy Factor Theorem in symbolic dynamics. We also consider some natural decidability questions for the class of products of idempotent CA.
\keywords{cellular automata, marker lemma, products of idempotents, decidability}
\end{abstract}

\section{Introduction}
\label{sec:Intro}

Two famous theorems in symbolic dynamics, namely the Embedding Theorem and the Lower Entropy Factor Theorem \cite{LiMa95}, have a similar flavor. In both, we have two subshifts of finite type $X$ and $Y$, such that $h(Y) > h(X)$. We then use the greater entropy of $Y$ to encode every block of $X$ in a suitable size range into a unique block of $Y$ of the same length, such that the corresponding block of $Y$ is always marked by a unique occurrence of an unbordered word $w$. The fact that we find sufficiently many such blocks in $Y$ is a simple consequence of entropy.

The main problem then becomes handling the periodic parts of a point, since in a long subword of period $p$, the words $w$ would need to be at most $p$ apart. This means that the possibility of encoding does not follow from a simple entropy argument. In fact, in both theorems, the necessary and sufficient conditions include an obvious requirement for periodic points, which doesn't automatically follow.

In this article, we solve a third problem using similar argumentation. Unlike the Embedding Theorem and the Lower Entropy Factor Theorem, which are inherently about subshifts, this is a problem for cellular automata: the problem of characterizing the cellular automata $F$ that arise as products of idempotent cellular automata (CA $G$ such that $G^2 = G$). We only consider the case of the full shift in this paper; the case of a mixing SFT would only add some notational overhead, and we will consider this, and further extensions, in a separate paper. It is easy to see that, apart from the trivial case of the identity CA, such a cellular automaton cannot be surjective. The higher entropy of the domain, and an obvious requirement on how $F$ acts on periodic points, are then used to construct $F$ as a product of idempotent CA.

The problem of characterizing the products of idempotent CA arose from its superficial similarity to the well-known open problem of characterizing the products of involutions (CA $G$ such that $G^2 = 1$) \cite{BoDaDo88}. Both problems are about the submonoid of all CA (with respect to composition) generated by a family of CA that, on their own, have very simple dynamics. In fact, just like involutions are the simplest possible type of reversible CA in the sense of generating the smallest possible nontrivial submonoids, idempotents give the simplest possible nontrivial non-surjective dynamics in the same sense. As we shall see, idempotent CA are much easier to handle than involutions, and the obvious necessary condition turns out to be sufficient.

In the process of proving the characterization, we also construct two CA which may be of interest on their own: In Lemma~\ref{lemma:WiggleRoom}, given a non-surjective CA $F$, we construct a non-surjective idempotent CA $E'$ such that $F(E'(x)) = F(x)$. This can be considered a `CA realization' of the Garden of Eden Theorem. Also, from the Marker Theorem, we directly extract a cellular automaton $M$ marking a `not too dense' subset of the coordinates which is `not too sparse' outside periodic parts of the given point. This way we see that the Marker Lemma in its full generality essentially follows from proving it for the full shift, so a uniform set of markers can be effectively constructed which works for even highly uncomputable subshifts. Lemma~\ref{lemma:IdempotentExtensionLemma} may also be of independent interest.

We will show examples of (types of) cellular automata which are not easily decomposable into a product of idempotents, but which are trivially seen to satisfy the conditions of the characterization. Finally, we discuss decidability questions, showing that it is decidable whether a cellular automaton can be decomposed into a product of idempotents, and that many natural questions that are undecidable for one-dimensional CA stay undecidable restricted to products of idempotent CA.

\section{Definitions and Useful Lemmas}

For points $x \in S^\Z$, we use the term \emph{subword} for all contents of finite, one-way infinite and bi-infinite continuous segments that occur in $x$. A subword $u$ is $p$-periodic if $u_i = u_{i + p}$ whenever both $i$ and $i + p$ are indices of $u$, and periodic if it is $p$-periodic for some $p > 0$.

\begin{definition}
A subset $X \subset S^\Z$ is called a \emph{subshift} if it is topologically closed in the product topology of $S^\Z$ and invariant under the left shift. This amounts to taking exactly the points $x \in S^\Z$ not containing an occurrence of a subword from a possibly infinite set of forbidden patterms. If this set of forbidden patterns can be taken to be finite, $X$ is said to be of finite type (an \emph{SFT}).
\end{definition}

In this paper, a \emph{cellular automaton} (or \emph{CA}) is defined as a continuous function between two subshifts $X$ and $Y$ which commutes with the left shifts of $X$ and $Y$. Such functions $F$ are defined by local maps $F_{\mbox{loc}} : S^{[-r, r]} \to S$ by $F(x)_i = F_{\mbox{loc}}(x_{[i-r, i+r]})$. A \emph{radius} of a CA is any $r$ that can be used to define the local map, \emph{the radius} of a CA refers to its minimal radius, and the \emph{neighborhood} of a CA on the full shift is the (unique, relative to $i$) set of cells on which its image at $i$ actually depends. Note that our definition of a cellular automaton does not require the domain and codomain to be equal. The term sliding block code is also used in symbolic dynamics \cite{LiMa95}. We say $F$ is a cellular automaton on the subshift $Z$ if $X = Y = Z$. We denote the identity CA defined by $G(x) = x$ by $\mbox{id}$. If $X$ is the image of an SFT under a cellular automaton, $X$ is said to be \emph{sofic}.

\begin{definition}
The composition, or \emph{product}, of two CA $F$ and $G$ is denoted in the usual way when the range of $G$ coincides with the domain of $F$: $(F \circ G)(x) = F(G(x))$. Note that $F \circ G$ is a cellular automaton.
\end{definition}

\begin{definition}
By $Q_n$ we denote the set of points of $S^\Z$ with \emph{least} period $n$.
\end{definition}

\begin{definition}
By $\IDEMP(X)$, we denote the set of idempotent CA on $X$, that is, CA $G : X \to X$ such that $G^2 = G$. When the alphabet $S$ is obvious from context, we will also write $\IDEMP = \IDEMP(S^\Z)$. Given a subshift $X$ and a class $\mathcal{CLS}$ of cellular automata on $X$, we write $\mathcal{CLS}^*$ for the class of cellular automata on $X$ that appear as products of CA in $\mathcal{CLS}$.
\end{definition}

\begin{definition}
For $u \in S^n$ ($x \in S^\Z$), we write $\lang{u}$ ($\lang{x}$) for the subwords of $u$ (finite subwords of $x$). For a subshift $X \subset S^\Z$, we write $\lang{X} = \bigcup_{x \in X} \lang{x}$.
\end{definition}

\begin{definition}
A set of words $V = \{v_1, \ldots, v_n\}$ is said to be \emph{mutually unbordered} (or $v_1, \ldots, v_n$ are mutually unbordered) if for all $v_i, v_j \in V$
\[ x_{[c_1, c_1+|v_i|-1]} = v_i, x_{[c_2, c_2+|v_j|-1]} = v_j, c_1 \leq c_2 \implies \]
\[ c_2 - c_1 \geq |v_i| \vee (c_1 = c_2 \wedge v_i = v_j) \]
A~word $v$ is said to be \emph{unbordered} if the set $\{v\}$ is mutually unbordered.
\end{definition}

\begin{definition}
We say that a cellular automaton $F$ is preinjective, if for all $x, y \in S^\Z$ such that $x \neq y$, and $x_j = y_j$ for all $|j| \geq N$ for some $N$, we have $F(x) \neq F(y)$.
\end{definition}

\begin{definition}
We say that the subshift $X \subset S^\Z$ is mixing if for all $u, v \in \lang{X}$, and for all sufficiently large $n$, there exists $w$ with $|w| = n$ such that $uwv \in \lang{X}$. It is easy to see that for a mixing SFT $X$, there is a uniform \emph{mixing distance} $m$ such that for any two words $u, v \in \lang{X}$, and for all $n \geq m$, $uwv \in \lang{X}$ for some $w$ with $|w| = n$.
\end{definition}

\begin{definition}
The $k$th SFT approximation of a subshift $X$ is the SFT obtained by allowing exactly the subwords of length $k$ that occur in $X$.
\end{definition}

We will need three classical results from the literature. First, we state the following version of the Garden of Eden theorem. This is a straightforward combination of Theorem~8.1.16 and Corollary~4.4.9 of \cite{LiMa95}.

\begin{lemma}[Garden of Eden Theorem]
Let $X$ be a mixing SFT. A cellular automaton $F : X \to X$ is preinjective if and only it is surjective.
\end{lemma}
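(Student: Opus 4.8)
The plan is to prove both implications through their contrapositives, using the topological entropy of $X$ as the bridge, together with the classical fact (Corollary~4.4.9 of \cite{LiMa95}) that a proper subshift of a mixing SFT has strictly smaller entropy. Throughout, fix a radius $r$ for $F$ and a memory $M$ for $X$ (so that membership in $X$ is decided by windows of length $M$), and abbreviate $\mathcal{L}_n(Z) = \lang{Z} \cap S^n$.

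First I would show that if $F$ is not surjective then it is not preinjective. Since $F(X)$ is a subshift and is now a proper one, the cited corollary gives $h(F(X)) < h(X)$. Hence for all large $n$ we have $\abs{\mathcal{L}_n(F(X))} \le 2^{(h(F(X)) + \varepsilon)n}$ while $\abs{\mathcal{L}_n(X)} \ge 2^{(h(X) - \varepsilon)n}$, and for small enough $\varepsilon$ these counts differ by an exponential factor. Using the mixing distance of $X$, I would fix words $p, s$ of length $M$ such that an exponential fraction of the words of $\mathcal{L}_n(X)$ have the form $pzs$ and such that the bi-infinite $X$-extensions of $pzs$ that agree with a fixed choice outside $[0,n-1]$ are available regardless of $z$; restricted to such $z$, the map $z \mapsto F(pzs)_{[r, n-1-r]}$ still cannot be injective for large $n$, so there exist $z \neq z'$ with the same image. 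Gluing $pzs$ and $pz's$ to the \emph{same} bi-infinite tails (possible because the ends coincide and $X$ is an SFT) produces $x \neq y$ that agree off a finite window with $F(x) = F(y)$, so $F$ is not preinjective.

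Conversely, suppose $F$ is not preinjective, witnessed by $x \neq y$ with $x_j = y_j$ for $\abs{j} \ge N$ and $F(x) = F(y)$. Set $w = x_{[-N-r-M,\, N+r+M]}$ and $w' = y_{[-N-r-M,\, N+r+M]}$; these agree on their length-$M$ prefixes and suffixes. Then in any point of $X$, replacing a sufficiently isolated occurrence of $w$ by $w'$ keeps the point in $X$ (the SFT constraints only see the unchanged ends) and leaves the $F$-image unchanged. Since an exponential fraction of the words in $\mathcal{L}_n(X)$ contain $\Omega(n)$ pairwise non-overlapping occurrences of $w$ — mixing of $X$ guarantees $w$ recurs with positive density in enough words — this ``$w \leftrightarrow w'$ diamond'' yields $\abs{\mathcal{L}_n(F(X))} \le (1-c)^{\Omega(n)}\,\abs{\mathcal{L}_n(X)}$ for some $c > 0$, so $h(F(X)) < h(X)$. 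In particular $F(X) \neq X$, i.e.\ $F$ is not surjective.

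I expect the main obstacle to be the bookkeeping in the two gluing arguments: ensuring that the bi-infinite points assembled in the first direction actually lie in $X$, and that the block swap in the second direction respects the SFT constraints. On the full shift both are immediate; for a mixing SFT one must invoke the mixing distance (to produce the connecting words $p, s$ and to force positive density of occurrences of $w$) and the finite memory $M$ (so that valid extensions depend only on a bounded window). This extra layer is exactly what Lind--Marcus package into Theorem~8.1.16, which is why the statement follows by combining it with Corollary~4.4.9.
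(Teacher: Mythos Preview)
The paper does not give its own proof of this lemma; it simply cites it as a combination of Theorem~8.1.16 and Corollary~4.4.9 of \cite{LiMa95}. Your sketch is precisely the standard entropy-based argument packaged in those references---and you yourself note this at the end---so the proposal is correct and, if anything, more detailed than the paper's treatment.
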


For the full shift, the two directions were first proved in \cite{Mo62} and \cite{My63}. We will need both directions of the Garden of Eden Theorem in the proof of Lemma~\ref{lemma:WiggleRoom}.

The following is a version of Lemma~10.1.8 from \cite{LiMa95} where instead of giving a set of cylinders $F$, we give a cellular automaton that, on $x \in X$, mark the cells $i$ such that $\sigma^i(x) \in \bigcup F$ with a $1$, outputting $0$ on all other cells.

\begin{lemma}[Marker Lemma]\cite{LiMa95}
Let $X$ be a shift space and let $N \geq 1$. Then there exists a cellular automaton $M : X \to \{0, 1\}^\Z$ such that
\begin{itemize}
\item the distance between any two $1$'s in $M(x)$ is at least $N$, and
\item if $M(x)_{(i-N, i+N)} = 0^{2N-1}$, then $x_{[i-N, i+N]}$ is $p$-periodic for some $p < N$.
\end{itemize}
\end{lemma}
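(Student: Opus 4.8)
The plan is to obtain $M$ by a direct translation of the classical Marker Lemma (Lemma~10.1.8 of \cite{LiMa95}): that result produces, for the given shift space $X$ and the given $N$, a finite set $F$ of cylinders — all determined by coordinates in a common bounded window — such that for every $x \in X$ the set of positions $D(x) = \{i \in \Z : \sigma^i(x) \in \bigcup F\}$ is $N$-separated and fails to be $N$-syndetic only across the periodic portions of $x$, in exactly the quantitative form appearing in the two bullet points. I would then define the cellular automaton $M : X \to \{0,1\}^\Z$ by $M(x)_i = 1$ if $\sigma^i(x) \in \bigcup F$ and $M(x)_i = 0$ otherwise. Continuity of $M$ is immediate, since $\bigcup F$ is clopen (a finite union of cylinders over a fixed window), so $M(x)_i$ depends only on a bounded window of $x$ around $i$; and $M$ commutes with the shift because $M(\sigma(x))_i = 1 \iff \sigma^{i+1}(x) \in \bigcup F \iff M(x)_{i+1} = 1$. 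The two asserted properties of $M$ are then verbatim restatements of the $N$-separation and of the behaviour of $D(x)$ outside periodic blocks, so nothing further is needed.

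For completeness I would also indicate why such an $F$ exists. Fix a linear order on $S$ and hence the induced lexicographic order on $S^{\ell}$ for a suitable window length $\ell$ of order $N$. For $x \in X$ and $c \in \Z$ write $w_c = x_{[c, c+\ell-1]}$, and declare $c$ to be a \emph{marker} if $w_c$ beats every competing window at distance less than $N$, reading weakly to one side and strictly to the other: $w_c \le w_j$ for $0 < c - j < N$ and $w_c < w_j$ for $0 < j - c < N$ (the asymmetry being only a tie-break device). Whether $c$ is a marker depends on $x$ through a bounded window around $c$, so the marker positions are indeed the occurrences of a finite cylinder set $F$. The $N$-separation is then immediate: if $c$ and $c'$ were both markers with $0 < c' - c < N$, then $w_c < w_{c'}$ (from $c$ being a marker) and $w_{c'} \le w_c$ (from $c'$ being a marker), a contradiction.

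The substantive point — and, as the introduction stresses, the recurring obstacle in arguments of this flavour — is the second property: controlling the periodic parts of $x$. One argues by contradiction: if the $2N-1$ coordinates of $(i-N, i+N)$ carry no marker, pick $c$ in this interval whose window $w_c$ is minimal over the interval. Unmarkedness of $c$ forces a competing window within distance $N-1$ that is $\le w_c$; minimality rules out such a competitor inside the interval unless it is an \emph{equal} window, and an equality $w_c = w_{c'}$ with $|c - c'| = t < N$ is exactly a statement of $t$-periodicity on a block of length $\ell + t - 1$. The only other possibility is a strictly smaller window just outside the interval, and iterating this step cannot continue indefinitely since the lexicographic order is well-founded on the finite set $S^\ell$; hence a short repetition must appear, and a Fine--Wilf type argument then combines the various local periods into a single period $p < N$ valid on all of $x_{[i-N, i+N]}$. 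Making this last interval bookkeeping produce precisely the stated constants is the part I expect to be fiddly; since it is exactly the content of Lemma~10.1.8 of \cite{LiMa95}, I would invoke that lemma rather than reprove it.
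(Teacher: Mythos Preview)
Your approach matches the paper exactly: the paper does not prove this lemma but cites it as Lemma~10.1.8 of \cite{LiMa95}, and the surrounding text already notes that defining $M(x)_i = 1$ iff $\sigma^i(x) \in \bigcup F$ yields the equivalent CA formulation. Your additional sketch of the lexicographic-minimum construction is extra detail beyond what the paper provides.
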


Our version of the Marker Lemma is clearly equivalent to that of \cite{LiMa95}, but makes it clearer that the marker CA for $S^\Z$ directly works for \emph{all} subshifts of $S^\Z$, since we avoid the explicit use of cylinders, which by definition depend on the subshift $X$. Note that this in particular implies that a uniform set of words defining the cylinders used as markers works for every subshift $X \subset S^\Z$ whether or not $X$ itself is in a any way accessible, and additional complexity in $X$ may not increase the length of these words.

We need the following subset of a lemma from \cite{Bo83} (see also \cite{Ma95}).

\begin{lemma}[Extension Lemma~2.4] \cite{Bo83}
Let $T$, $T'$ and $U$ be subshifts and let $F : T' \to U$ be a CA, so that the following conditions are satisfied:
\begin{itemize}
\item $U$ is a mixing SFT.
\item $T'$ is a subshift of $T$.
\item the period of any periodic point of $T$ is divisible by the period of some periodic point of $U$.
\end{itemize}
Then $F$ can be extended to a CA $G : T \to U$ so that $G|_{T'} = F$.
\end{lemma}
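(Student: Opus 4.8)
The plan is to extend $F$ by a marker construction. The idea is to run the local rule of $F$ verbatim on the long stretches of $x \in T$ that locally look like subwords of $T'$, to overwrite the remaining stretches with freely chosen words of $U$ spliced in using mixing, and to treat deeply periodic stretches by a separate, phase-synchronized rule supplied by the divisibility hypothesis. Since points of $T'$ contain only stretches of the first kind, the construction will automatically satisfy $G|_{T'} = F$.

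Concretely, fix a radius $r$ for $F$, a mixing distance $m$ for $U$, and $k$ such that $U$ equals its $k$th SFT approximation, and choose constants $R \gg N \gg m, k, r$; apply the Marker Lemma to $T$ with parameter $N$ to obtain $M : T \to \{0,1\}^\Z$. Call a coordinate $i$ \emph{good} in $x$ if $x_{[i-R, i+R]} \in \lang{T'}$. On the interior of a long (say, longer than $4R$) run of good coordinates I would set $G(x)_i = F_{\mathrm{loc}}(x_{[i-r, i+r]})$. This is legal: every length-$k$ window of the resulting output word is dictated by a length-$(k+2r)$ window of $x$ lying in $\lang{T'}$ (being contained in some $x_{[j-R, j+R]}$), and the $F$-image of such a window necessarily lies in $\lang{U}$, so the output avoids every forbidden word of $U$. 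Moreover, if $x \in T'$ then \emph{every} coordinate is good and the whole line is a single good run, so this clause alone gives $G(x)_i = F_{\mathrm{loc}}(x_{[i-r,i+r]}) = F(x)_i$ — precisely the agreement we need on $T'$, and the reason the remaining clauses are designed never to fire there.

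For a point $x$ with bad coordinates I would use $M(x)$: the bad coordinates form intervals, and around each bad interval I select anchoring $1$'s of $M(x)$ and overwrite $G(x)$ on the enlarged region between them by a word of $U$ chosen canonically (lex-least, say) that bridges the two $U$-words already produced by the good clause on either side; because $N$ is large, the enlarged regions between consecutive anchors have length at least $m$, so mixing of $U$ supplies such a bridge, and only $O(R + m + k)$ coordinates of $x$ are consulted, so $G$ has bounded radius. Finally, a run of coordinates with no nearby $1$ of $M(x)$ is, by the Marker Lemma, $p$-periodic for some $p < N$ (and a globally $p$-periodic point carries no $1$'s at all, since a $1$ would force $1$'s at distance $p < N$); on its interior, if the periodic pattern is in $\lang{T'}$ the coordinates are good and the good clause applies and automatically outputs a $p$-periodic word, and otherwise I output, at the appropriate phase, the corresponding symbol of a fixed point $y_p \in U$ whose period divides $p$ — such a $y_p$ exists precisely by the divisibility hypothesis — with the phase keyed to the lex-least rotation of the period block so that the rule reads only a bounded window and is consistent. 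Periodic points of $T'$ fall under the good case, so this clause too leaves $G|_{T'} = F$ undisturbed.

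It then remains to reconcile these clauses into a single bounded-radius cellular automaton $G : T \to U$: the good, bad, and periodic regions are separated by transition zones of width $O(R)$ in which the rule interpolates, and one verifies that the choices made (which $1$'s to anchor, which canonical bridge and which phase to use) depend only on a bounded window and never conflict. This assembly — merging the three regimes consistently while keeping the image inside $U$ — is where I expect the real work to lie; it is exactly the ``handling the periodic parts of a point'' phenomenon flagged in the introduction, and it is where both hypotheses earn their keep: mixing of $U$ to splice arbitrary $U$-blocks across bad regions, and the divisibility hypothesis to provide any legal $U$-content at all on a periodic stretch that does not come from $T'$.
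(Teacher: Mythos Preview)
The paper does not prove this statement at all: it is quoted verbatim as Extension Lemma~2.4 from \cite{Bo83} and used as a black box in the proof of Lemma~\ref{lemma:IdempotentExtensionLemma}. There is therefore nothing in the paper to compare your proposal against.

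That said, your sketch is in the right spirit for how such extension results are proved in symbolic dynamics --- a marker construction that runs $F$ on the parts of $x$ that locally belong to $T'$, fills the aperiodic bad regions using mixing of $U$, and handles long periodic stretches via the divisibility hypothesis. You correctly identify where the actual difficulty sits: the consistency of the three regimes at their interfaces, and in particular keeping the output inside $U$ across the transition zones. As written, though, this is an outline rather than a proof: the phrases ``select anchoring $1$'s'' and ``the rule interpolates'' hide exactly the delicate bookkeeping that the cited reference carries out, and your phase-keying for periodic stretches (``keyed to the lex-least rotation of the period block'') must also be reconciled with the bridge words chosen at the two ends of a finite periodic stretch, which you do not address. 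If you intend to supply a self-contained proof rather than cite \cite{Bo83}, that reconciliation is the part you would need to write out in full.
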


By an application of the Extension Lemma, we obtain a very useful lemma for idempotent CA, which simplifies our construction in Section~\ref{sec:Characterization}.

\begin{lemma}
\label{lemma:IdempotentExtensionLemma}
Let the CA $F : X \to Y$ be surjective and idempotent for a subshift $X \subset S^\Z$ and a mixing subshift $Y \subset X$ containing a unary point (a point ${}^\infty a^\infty$ for $a \in S$). Then there exists an idempotent CA $G : S^\Z \to S^\Z$ such that $G|_X = F|_X$.
\end{lemma}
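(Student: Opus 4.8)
The plan is to produce $G$ as a cellular automaton from the full shift into a carefully chosen \emph{mixing SFT} $Z$ with $Y \subseteq Z$, on which $G$ restricts to the identity. Once we have such a $G$ with $G|_X = F$, idempotence comes for free: $G(x) \in Z$ for every $x$ and $G$ fixes $Z$ pointwise, so $G(G(x)) = G(x)$. The crux is to choose $Z$ so that $Z \cap X = Y$ exactly; this is what lets us glue ``$F$ on $X$'' and ``identity on $Z$'' into one cellular automaton, which a single application of the Extension Lemma then extends to all of $S^\Z$.

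First I would record the easy consequences of $F$ being surjective onto $Y$ and idempotent: $F|_Y = \mathrm{id}_Y$, and hence $\{x \in X : F(x) = x\} = Y$. Fix a radius $r$ and local rule $F_{\mathrm{loc}} : S^{[-r,r]} \to S$ for $F$; since $F$ fixes $Y$ pointwise, $F_{\mathrm{loc}}$ sends every word of $\lang{Y}$ of length $2r+1$ to its central symbol. Let $Z$ be an SFT approximation of $Y$ of index at least $2r+1$ that is mixing (such exists, as SFT approximations of a mixing subshift are mixing for large index). Then $Z$ is a mixing SFT, it contains $Y$ and thus the unary point ${}^\infty a^\infty$, and $Z \cap X = Y$: if $z \in X \setminus Y$ then $F(z) \neq z$, so $z$ has a length-$(2r+1)$ subword $v$ with $F_{\mathrm{loc}}(v)$ different from the centre of $v$; then $v \notin \lang{Y}$, so $z$ is excluded from the $(2r+1)$st SFT approximation of $Y$, hence from $Z$.

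Next I would form the subshift $X \cup Z$ and the map $\phi : X \cup Z \to Z$ with $\phi|_X = F$ and $\phi|_Z = \mathrm{id}$. These agree on the overlap $X \cap Z = Y$, where both are the identity, and in fact $\phi$ is just the sliding block code with local rule $F_{\mathrm{loc}}$ restricted to $X \cup Z$: on $X$ it computes $F$, while on $Z$ every length-$(2r+1)$ window lies in $\lang{Y}$ and is sent to its centre; and its image lies in $Z$ since $F(X) = Y \subseteq Z$. Thus $\phi$ is a cellular automaton from the subshift $X \cup Z$ into the mixing SFT $Z$. Now apply the Extension Lemma with $T' = X \cup Z$, $T = S^\Z$, $U = Z$: $Z$ is a mixing SFT, $X \cup Z$ is a subshift of $S^\Z$, and the period $n \geq 1$ of any periodic point of $S^\Z$ is divisible by $1$, the period of ${}^\infty a^\infty \in Z$. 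We obtain a CA $G : S^\Z \to Z$ extending $\phi$; in particular $G|_X = F|_X$ and $G|_Z = \mathrm{id}$, so $G$ is idempotent, as required.

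I expect the only delicate point to be the choice of $Z$: taking $Z = Y$ fails because $Y$ need not be an SFT, so the Extension Lemma does not apply, and a coarser SFT approximation can make $Z \cap X$ strictly larger than $Y$, so that $\phi$ is not well defined. The observation that rescues the argument is that blocks of length $2r+1$ already separate $Y$ from $X \setminus Y$, since $F$ has radius $r$ and moves every point of $X \setminus Y$; hence any SFT approximation of index at least $2r+1$ meets $X$ in exactly $Y$. Everything else — that $\lang{Y}$-blocks are $F_{\mathrm{loc}}$-fixed, that SFT approximations of a mixing subshift are eventually mixing, and that the glued map $\phi$ is a cellular automaton — is routine.
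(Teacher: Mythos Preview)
Your proof is correct and essentially the same as the paper's: both take $Z$ (the paper calls it $U$) to be the $(2r+1)$st SFT approximation of $Y$, observe that the local rule of $F$ already acts as the identity on $Z$ and hence defines a CA $X \cup Z \to Z$, and then invoke the Extension Lemma with the unary point handling the period condition. Your verification that $Z \cap X = Y$ is correct but not actually needed --- once you note (as you do) that $\phi$ is simply the sliding block code given by $F_{\mathrm{loc}}$, well-definedness is automatic, and this is how the paper proceeds; so the point you flag as ``the only delicate'' one is in fact bypassed entirely.
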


\begin{proof}
It is easy to see that for any $k$ the $k$th SFT approximation of a mixing subshift is mixing. Since $F$ is idempotent, we have $F|_Y = \mbox{id}|_Y$. Let $r$ be the radius of $F$ and let $U$ be the $(2r+1)$th (mixing) SFT approximation of $Y$, which contains the unary point of $Y$. Note that we obtain an idempotent cellular automaton $F'$ on $X \cup U$ by directly using the local rule of $F$, since points in $X$ map to $Y$, and $F'$ is the identity map on the whole subshift $U$ (since $r$ is the radius of $F$). By the same argument, we may take $F'$ to be a cellular automaton from $X \cup U$ to $U$.

We apply the Extension Lemma to $T = S^\Z$, $T' = X \cup U$, $U$, and the CA $F' : X \cup U \to U$. This gives us a CA $G : S^\Z \to U$ such that $G|_{X \cup U} = F'$. Since $G(x) \in U$ for all $x \in S^\Z$, and $F'$ is the identity map on $U$, it follows that $G$ is idempotent as a cellular automaton on $S^\Z$. On the other hand, $G|_X = F'|_X = F|_X$, which concludes the proof.
\end{proof}

Of course, we could prove a version of Lemma~\ref{lemma:IdempotentExtensionLemma} for extensions to subshifts other than the full shift, as long as the periodic point condition of the Extension Lemma is satisfied.

\section{Cellular Automata Generated by Idempotents on the Full Shift}
\label{sec:Characterization}

We will prove the following theorem in this article.

\begin{theorem}
\label{theorem:Characterization}
$G \in \IDEMP^*$ if and only if
\begin{equation}
\label{eq:IdIfNoWiggle}
\forall n: (G(Q_n) = Q_n \implies G|_{Q_n} = \mbox{id}|_{Q_n}) \wedge (G(S^\Z) = S^\Z \implies G = \mbox{id}).
\end{equation}
\end{theorem}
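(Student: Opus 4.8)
The plan is to prove both directions. The forward direction ($G \in \IDEMP^*$ implies \eqref{eq:IdIfNoWiggle}) should be the easy one: if $G = G_1 \circ \cdots \circ G_k$ with each $G_i$ idempotent, and $G$ maps $Q_n$ onto $Q_n$, then since $|Q_n|$ is finite each $G_i$ must permute... actually $Q_n$ need not be finite, but the relevant observation is that an idempotent map which is surjective on a set is the identity on that set, and surjectivity passes through the composition. More precisely, if $G(Q_n) = Q_n$ then each $G_i$ must be surjective on the appropriate shift-invariant finite-to-one image; one argues that an idempotent CA restricted to a subshift on which it is surjective is the identity there (since $G_i = G_i^2$ and surjectivity forces injectivity on that subshift by a counting/preinjectivity argument on the periodic points, hence $G_i$ is the identity there), and then $G$ is a composition of identities on $Q_n$, so $G|_{Q_n} = \mbox{id}$. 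The same argument with $Q_n$ replaced by $S^\Z$ handles the surjective case. I expect this direction to take under a page.

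The hard direction is that \eqref{eq:IdIfNoWiggle} implies $G \in \IDEMP^*$. The strategy, following the paper's stated analogy with the Embedding and Lower Entropy Factor Theorems, is to build $G$ out of finitely many idempotents by a marker-based encode/compute/decode scheme. First I would dispose of the degenerate case where $G$ is surjective (then $G = \mbox{id}$ by hypothesis, and $\mbox{id} = \mbox{id}^2 \in \IDEMP^*$). So assume $G$ is not surjective. The key tool is Lemma~\ref{lemma:WiggleRoom} (referenced but not yet stated in the excerpt): from non-surjectivity of $G$ we get a non-surjective idempotent $E'$ with $G \circ E' = G$; intuitively $E'$ collapses the full shift onto a proper sofic subshift $X = E'(S^\Z)$ of strictly smaller entropy, while remembering enough to let $G$ act. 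Then it suffices to realize $G|_X$ (suitably understood) as a product of idempotents, where now we have genuine "room": $h(X) < h(S^\Z)$.

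On $X$ I would use the Marker Lemma to obtain a shift-commuting way to mark a sparse-but-not-too-sparse set of coordinates, away from the periodic parts of each point. Using these markers together with the strictly larger entropy of $S^\Z$ over $X$, one encodes, inside marked blocks, the data needed to compute $G$, performs the computation of $G$ cell by cell, and then erases the scaffolding — each of the "encode", "compute", and "decode" phases being arranged as an idempotent CA (or a bounded product of such), exactly as in the classical proofs. The periodic points are precisely where the marker density argument fails, and here the hypothesis \eqref{eq:IdIfNoWiggle} enters: on each $Q_n$, either $G(Q_n) \subsetneq Q_n$ (so there is a "free direction" to exploit, giving wiggle room even without markers), or $G|_{Q_n} = \mbox{id}$ (so nothing needs to be done there, and the identity is trivially idempotent). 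I expect the main obstacle to be exactly this reconciliation: making the marker-driven construction on aperiodic parts agree, across a single shift-commuting CA, with the ad hoc treatment of each $Q_n$, uniformly in $n$ — this is the analogue of the "periodic point condition" that is the crux of the Embedding Theorem, and Lemma~\ref{lemma:IdempotentExtensionLemma} is presumably what glues the pieces together by extending partially-defined idempotents from the relevant subshifts to all of $S^\Z$. A secondary technical point is bounding the number of idempotent factors and checking each constructed factor really satisfies $E^2 = E$, not merely that the overall composite is $G$.
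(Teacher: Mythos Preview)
Your plan is correct and follows essentially the same route as the paper: the easy direction via idempotents acting as the identity on their image, and the hard direction via a Garden-of-Eden idempotent $E$ (your $E'$), a marker-based encoder $A$, a periodic-part handler $P$ built from finite-set shuffles, and a finalizer $F$, with Lemma~\ref{lemma:IdempotentExtensionLemma} doing the gluing. One small slip worth correcting: $Q_n$ \emph{is} finite (at most $|S|^n$ points), and this finiteness, together with the fact that a CA cannot increase the least period, is precisely what forces each factor $G_i$ to map $Q_n$ onto itself when the composite does---your fallback ``surjectivity passes through the composition'' is not automatic without it.
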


It is easy to see that `only if' holds.

\begin{lemma}
\label{lemma:OnlyIf}
Let $X \subset S^\Z$ be a subshift and let $G = G_n \circ \cdots \circ G_1$ for some $G_i \in \IDEMP(X)$. Then $G$ satisfies Equation~\eqref{eq:IdIfNoWiggle} where we have $Q_n' = Q_n \cap X$ in place of $Q_n$.
\end{lemma}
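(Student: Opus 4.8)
The plan is to verify the two conjuncts of Equation~\eqref{eq:IdIfNoWiggle} separately, using the basic fact that a composition of surjective maps is surjective, and, conversely, that if a composition is surjective then so is the last map applied (indeed every factor whose image can be ``undone'' on the right). Write $G = G_n \circ \cdots \circ G_1$ with each $G_i$ idempotent on $X$, so each $G_i$ is in particular a CA $X \to X$ with $G_i^2 = G_i$; hence the image $Y_i := G_i(X)$ satisfies $G_i|_{Y_i} = \mathrm{id}|_{Y_i}$.

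For the periodic-point conjunct, fix $n$ and suppose $G(Q_n') = Q_n'$, where $Q_n' = Q_n \cap X$. First I would note that each $G_i$ maps $Q_n'$ into $\bigcup_{d \mid n} (Q_d \cap X)$: a CA cannot increase the least period of a periodic point beyond a divisor of it, because $G_i(x)$ is again $n$-periodic whenever $x$ is. The cleanest way to run the argument is to work with the finite set $P = \{x \in X : \sigma^n(x) = x\} = \bigcup_{d \mid n}(Q_d \cap X)$, which is finite and invariant under every CA on $X$. Then each $G_i|_P$ is an idempotent self-map of the finite set $P$, and $G|_P = G_n|_P \circ \cdots \circ G_1|_P$. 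Now the hypothesis gives $G(Q_n') = Q_n'$, and since $|P| < \infty$ and $G(P) \subseteq P$, counting (or the pigeonhole principle applied to the orbit of cardinalities) forces $G|_{Q_n'}$ to be a bijection of the finite set $Q_n'$; because $G$ also maps $P$ into itself, one gets that $G$ actually permutes $P$ (it cannot collapse anything inside $P$ without shrinking the image, contradicting bijectivity on the top stratum together with invariance of the lower strata). But a product of idempotents on a finite set that happens to be a bijection must be the identity on that set: an idempotent self-map $e$ of a finite set is a bijection only if $e = \mathrm{id}$, and if $G = G_n \circ \cdots \circ G_1$ is injective then $G_1$ is injective, hence $G_1 = \mathrm{id}$ on $P$, and inductively every $G_i = \mathrm{id}$ on $P$, so $G = \mathrm{id}$ on $P \supseteq Q_n'$. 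This yields $G|_{Q_n'} = \mathrm{id}|_{Q_n'}$.

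For the surjectivity conjunct, suppose $G(X) = X$. Since $G_1(X) \supseteq$ nothing helps directly, I instead use $X = G(X) = G_n(\cdots G_1(X) \cdots) \subseteq G_n(X) \subseteq X$, forcing $G_n(X) = X$; by idempotence $G_n = G_n|_{G_n(X)} = \mathrm{id}$. Peeling off $G_n$, we get $G_{n-1} \circ \cdots \circ G_1$ surjective, and the same argument gives $G_{n-1} = \mathrm{id}$; inductively every $G_i = \mathrm{id}$, so $G = \mathrm{id}$. (Alternatively, one can invoke that a composition is injective iff it is surjective on a mixing SFT together with preinjectivity, but the direct argument above is simpler and needs no such hypothesis since $X$ is arbitrary here.)

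I expect the only mildly delicate point to be the finite-set bookkeeping in the first conjunct: one must be careful that ``$G(Q_n') = Q_n'$'' is a statement about the single stratum of exact-period-$n$ points, not the whole divisor-closed set $P$, and argue that surjectivity onto that stratum, combined with $G(P) \subseteq P$ and $G$ mapping each lower stratum into the divisor-closed set below it, forces bijectivity of $G$ on all of $P$. Once bijectivity on $P$ is in hand, the descent ``injective product of idempotents on a finite set $\Rightarrow$ each factor is the identity there'' is routine. Everything else is immediate from idempotence and the elementary behavior of surjective compositions.
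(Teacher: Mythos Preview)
Your treatment of the surjectivity conjunct is correct and is essentially the paper's reasoning (peel off the outermost factor, which must be surjective hence the identity, and iterate). The gap is in the periodic-point conjunct: the assertion that $G(Q_n') = Q_n'$ forces $G$ to permute all of $P = \{x \in X : \sigma^n(x) = x\}$ is false. For a concrete counterexample on $\{0,1\}^\Z$ with $n=2$, take the single idempotent $G_1(x)_i = x_i \wedge \neg x_{i+1}$; one checks directly that $G_1^2 = G_1$, that $G_1$ fixes both points of $Q_2'$ (the two shifts of $(01)^\infty$), but that $G_1(\bar 1) = \bar 0$, so $G_1|_P$ is not a bijection even though $G_1(Q_2') = Q_2'$. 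Your chain ``$G$ injective on $P \Rightarrow G_1$ injective on $P \Rightarrow G_1 = \mathrm{id}$ on $P$'' therefore never gets started: the step you yourself flagged as ``mildly delicate'' is exactly where the argument breaks, and it cannot be repaired by invoking invariance of the lower strata, since that only gives $G(P\setminus Q_n') \subseteq P\setminus Q_n'$, not bijectivity there.

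The fix, which is what the paper does, is to stay on the single stratum $Q_n'$ and show $G_i(Q_n') = Q_n'$ for every $i$. Set $A_0 = Q_n'$ and $A_j = G_j(A_{j-1})$, so $A_k = G(Q_n') = Q_n'$ (with $k$ the number of factors). Cardinalities give $|Q_n'| = |A_k| \le |A_{k-1}| \le \cdots \le |A_0| = |Q_n'|$, so all $|A_j| = |Q_n'|$. Since a CA cannot raise least period, the preimage in $P$ of $Q_n'$ under any $G_i$ lies in $Q_n'$; hence from $A_{j+1} \subseteq Q_n'$ one gets $A_j \subseteq Q_n'$, and by downward induction $A_j = Q_n'$ for all $j$. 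Each $G_i$ is then a surjective (hence bijective) idempotent self-map of the finite set $Q_n'$, so $G_i|_{Q_n'} = \mathrm{id}$, and $G|_{Q_n'} = \mathrm{id}$ follows. This is the same cardinality bookkeeping you attempted, just run on $Q_n'$ rather than on $P$.
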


\begin{proof}
Let $G(Q_n') = Q_n'$. Then for all $G_i$, also $G_i(Q_n') = Q_n'$ since $Q_n'$ is finite and points can only map from $Q_n'$ to $Q_j'$ with $j \leq n$. But $G_i \in \IDEMP(X)$ so $G_i$ acts as identity on its image, in particular on $Q_n'$, and thus also $G$ acts as identity on $Q_n'$.

Even more obviously, if $G(S^\Z) = S^\Z$ then $G$ acts as identity everywhere.
\end{proof}

It is not hard to show that binary xor-with-right-neighbor on the full shift satisfies the leftmost implication of \eqref{eq:IdIfNoWiggle}, since no $Q_n$ is mapped onto itself. However, it does not satisfy the rightmost implication, so the lhs does not imply the rhs. It is also easy to find a nonsurjective CA the does not satisfy the lhs.

Since we will prove the converse to Lemma~\ref{lemma:OnlyIf} in the rest of this section in the case $X = S^\Z$, assume $G$ satisfies \eqref{eq:IdIfNoWiggle}. It is clear that the identity map is generated by idempotents, so we may assume $G$ is not surjective. By Lemma~\ref{lemma:IdempotentExtensionLemma}, it is enough to show that the cellular automata $F$ we construct are defined, and idempotent, on $Y \cup F(Y)$ where $Y$ the image of the chain of CA constructed sofar.

We will construct $G = F \circ P \circ A \circ E$ as the product of the $4$ CA
\begin{itemize}
\item E, the Garden of Eden CA;
\item A, the Aperiodic Encoder CA;
\item P, the Period Rewriter CA;
\item F, the Finalizer CA.
\end{itemize}
The CA $P$ will be a product of idempotent cellular automata, while the rest are idempotent themselves.

We will dedicate a short subsection to each of these cellular automata, and the crucial idea behind each CA is extracted into a lemma, except for the highly problem-specific $F$. In the case of Section~\ref{sec:Encoding}, this is just the Marker Lemma.

\subsection{Forbidding a Word from the Input: $E$}

Let us start by rewriting the point so that some subword never appears, without changing the image of $G$.

\begin{lemma}
\label{lemma:WiggleRoom}
Let $G' : S^\Z \to S^\Z$ not be surjective. Then there exists an idempotent non-surjective cellular automaton $E'$ such that $G'(E'(x)) = G'(x)$ .
\end{lemma}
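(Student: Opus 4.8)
The plan is to use the Garden of Eden theorem to find a word that $G'$ never needs to see, and then build $E'$ as an idempotent CA that rewrites every occurrence of that word into something harmless while leaving $G' \circ E' = G'$. Since $G'$ is not surjective, it is not preinjective by the Garden of Eden Theorem (for the full shift), so there exist two distinct points $x \neq y$ agreeing outside a finite window with $G'(x) = G'(y)$. By shifting and enlarging the window we may assume $x_{[0,\ell-1]} \neq y_{[0,\ell-1]}$ while $x_j = y_j$ for $j \notin [0,\ell-1]$; write $u = x_{[-r, \ell-1+r]}$ and $v = y_{[-r,\ell-1+r]}$ where $r$ is a radius for $G'$, so $u \neq v$ but $u$ and $v$ have the same image under the local rule of $G'$ at every coordinate. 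The idea is that $v$ is a word we can afford to forbid from the input: wherever we see $v$, replace it by $u$.

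The technical care is in making the replacement \emph{local}, \emph{idempotent}, and \emph{invisible to $G'$}. First I would arrange that $u$ and $v$ are mutually unbordered of equal length and that $u$ does not contain $v$ as a subword; this can be done by padding both $u$ and $v$ with a long common buffer of a fixed letter on each side — the buffer can be chosen long enough (and the padded words are still image-equivalent under $G'$, since we only extended by identical context). Then $E'$ is defined to scan for occurrences of the padded word $v$; because $v$ is unbordered, occurrences of $v$ in any input are pairwise non-overlapping, so $E'$ can unambiguously and locally replace each occurrence of $v$ by $u$ (and act as the identity elsewhere). This $E'$ has a radius bounded by $|v|$. Non-surjectivity of $E'$ is clear since $v$ never appears in any output of $E'$ (here we use that $u$ does not contain $v$, so the rewriting does not create new copies of $v$). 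Idempotence follows for the same reason: after one application no copy of $v$ remains, so a second application does nothing. Finally $G' \circ E' = G'$ because at every coordinate the local rule of $G'$ sees either unchanged input, or a region where some copies of $v$ were turned into copies of $u$; since the padding is longer than $2r$, the non-overlapping replaced blocks affect $G'$'s output independently, and on each one $G'$ produces the same symbols from $u$ as from $v$ by construction.

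The main obstacle I anticipate is exactly the bookkeeping around \emph{overlaps and context}: one must ensure that (i) distinct occurrences of $v$ in the input are far enough apart that their independent rewriting does not interfere with the local rule of $G'$, which is why the fixed-letter padding must exceed $2r$; and (ii) the rewriting genuinely never manufactures a fresh occurrence of $v$ straddling an old boundary, which again is handled by the unborderedness of $v$ together with choosing the padding letter and lengths so that no copy of $v$ can appear in $u$ or across two copies of $u$. Once the words $u, v$ are set up with these properties — and the Garden of Eden Theorem combined with a routine padding argument delivers them — the verification that $E'$ is idempotent, non-surjective, and satisfies $G'(E'(x)) = G'(x)$ is direct.
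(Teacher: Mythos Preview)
Your overall strategy matches the paper's: use the Garden of Eden theorem to produce two equal-length words that $G'$ cannot distinguish, and let $E'$ rewrite one into the other. The gap is in the step where you ``arrange that $u$ and $v$ are mutually unbordered \ldots\ by padding both $u$ and $v$ with a long common buffer of a fixed letter on each side.'' A word of the form $a^k v a^k$ has $a, a^2, \ldots, a^k$ as borders, so it is as far from unbordered as one can get; more generally, identical two-sided padding forces the padded $u$ and $v$ to share a common nonempty prefix and suffix, so simultaneous unborderedness is certainly not automatic from the padding length. Concretely, an input segment $a^{2k} v a^{2k}$ contains $k{+}1$ overlapping copies of $a^k v a^k$, so ``replace each occurrence of $v$ by $u$'' is ambiguous; and however you disambiguate, two of the \emph{cores} $v$ can sit within distance $1$ of each other, so the sentence ``since the padding is longer than $2r$, the non-overlapping replaced blocks affect $G'$'s output independently'' no longer applies, and the conclusion $G'\circ E' = G'$ is unjustified. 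The claimed idempotency and non-surjectivity (``after one application no copy of $v$ remains'') rest on the same false unborderedness and fall with it.

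The paper sidesteps this entirely: it makes no attempt to manufacture unbordered words. Instead $E'$ rewrites an occurrence of $u$ to $u'$ only when that occurrence is the \emph{unique} copy of $u$ in a window of length roughly $5|u|$ around it, and only when the rewrite does not create a new $u$ overlapping the old position. Idempotency is then argued directly (two nearby occurrences of $u$ permanently block each other, so an occurrence rewritten on the second step could not have been blocked on the first). Non-surjectivity is obtained not by exhibiting a forbidden word in the image but by noting $E'({}^\infty a\, u\, b^\infty) = E'({}^\infty a\, u'\, b^\infty)$ for suitable $a,b$ and invoking the \emph{other} direction of the Garden of Eden theorem. Your argument can be repaired along these lines---add an ``only rewrite isolated, safe occurrences'' clause instead of relying on unborderedness---but as written the padding step does not deliver what you need.
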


\begin{proof}
Let $Z \subsetneq S^\Z$ be the image of $G'$. The Garden of Eden theorem says there is a positive length word $u$ that we can always rewrite to a different word $u'$ with $|u| = |u'|$ without changing the image of $G'$. Clearly we may assume $|u| > 1$. We take one such $u$ and take the automaton $E'$ that rewrites an occurrence of $u$ at $x_{[i, i+|u|-1]}$ to $u'$ if
\begin{itemize}
\item $u$ occurs exactly once in $x_{[i-2|u|+1,i+3|u|-2]}$
\item rewriting $u$ to $u'$ does not introduce a new $u$ overlapping the original occurrence.
\end{itemize}

Assume on the contrary that $E'^2 \neq E'$ and let $E'(x)_{[i, i+|u|-1]} = u$ for $x \in S^\Z$ such that $E'$ rewrites this $u$ to $u'$. The first condition makes sure that at most one rewriting could have happened such that the new $u$ introduced overlaps $[i, i+|u|-1]$. But this means that the second condition could not have been satisfied. Therefore, none of the cells have been rewritten, and necessarily $x_{[i, i+|u|-1]} = u$.

It is impossible for the cells at most $|u|-1$ away from the occurrence of $u$ to have changed, so the first condition was the reason $u$ was not rewritten in the first place, and there is a nearby occurrence of $u$ at $j$ in $x$ preventing this. But then, the two occurrences of $u$ in $i$ and $j$ prevent each other from being rewritten in the whole orbit of the point $x$. This is a contradiction, since we assumed the occurrence at $i$ is rewritten on the second step. This means $E'$ must be idempotent.

Since $E'({}^\infty a u b^\infty) = E'({}^\infty a u' b^\infty)$ for $a \neq u_1$, $b \neq u_{|u|}$, $E'$ is not preinjective, and the other direction of the Garden of Eden theorem says that its image is not the full shift.
\end{proof}

We take $E = E'$, as given by Lemma~\ref{lemma:WiggleRoom} for $G' = G$, as our first idempotent CA. Let $v \notin \lang{E(S^\Z)} \cup \lang{G(S^\Z)}$ and let $Y = \{x \;|\; v \notin \lang{x}\}$. We choose three mutually unbordered words $w, w_0, w_1$ all containing a single copy of $v$ such that $v$ can only overlap $w$, $w_0$ or $w_1$ at its unique occurrence within it. Further, we may assume $Y' = \{x \in S^\Z \;|\; w \notin \lang{x}\}$ is mixing.

\subsection{Encoding Aperiodic Parts and Memorizing Periodic Parts: $A$}
\label{sec:Encoding}

Next, we construct the CA $A$ that, when started from a point not containing the word $v$, marks the borders of long enough periodic subwords (with small enough period) memorizing the repeated pattern, and encodes the aperiodic parts by occurrences of $v$. For this, we need a suitable definition for `long enough periodic subword' and `small enough period'.

Let $m$ be large enough that
\begin{equation}
\label{eq:EncodingLengths}
\abs{\{wuw \;|\; u \in S^{n - 2|w|} \cap \lang{Y'}\}} > \abs{\{u \in \lang{Y} \;|\; |u| = n\}}
\end{equation}
for all $n \geq m$. This is possible by a standard entropy argument since $Y'$ is a mixing SFT and $Y \subsetneq Y'$. Note that since $w$ is unbordered, $w$ occurs only twice in $wuw$ on the LHS. Let $k$ be such that in a word of length $k$, no two distinct periods $p_i, p_j \leq m$ can occur.

Let $y \in S^\Z$, and let $M$ be given by the Marker Lemma for the full shift and $N = m + 1$, and let $M$ have radius $r$. For now, let $r' > 0$ be arbitrary (to be specified later). We construct a shift-commuting function $A$ as follows, applying the rules top-down:
\begin{itemize}
\item If $v$ occurs in $y_{[i-r',i+r']}$, the cell $i$ is not rewritten.
\item If $M(y)_{[i-1,i+2(|w_0|+m+|w_1|)+k-1]} \in 10^*$, the word $y_{[i,i+2(|w_0|+m+|w_1|)+k-1]}$ has a unique period $p \leq m$ by the Marker Lemma and the choice of $k$, and $A$ sandwiches $t = y_{[i, i+p-1]}$ between $w_0$ and $w_1$ rewriting $y_{[i, i+|w_0|+p+|w_1|-1]}$ by $w_0 t w_1$.
\item If $M(y)_{[i-2(|w_0|+m+|w_1|)-k+1,i+1]} \in 0^*1$, the word $y_{[i-2(|w_0|+m+|w_1|)-k+1,i]}$ has a unique period $p \leq m$ by the Marker Lemma and the choice of $k$, and $A$ sandwiches $t = y_{[i-p+1, i]}$ between $w_1$ and $w_0$ rewriting $y_{[i-|w_1|-p-|w_0|+1, i]}$ by $w_1 t w_0$.
\item If $M(y)_{[i,i+n+1]} = 10^n1$ for $n \leq 2(|w_0|+m+|w_1|)+k-2$, $A$ injects $y_{[i,i+n]}$ into a word $wuw$ where $u$ does not contain $w$.
\end{itemize}
The last property is possible by the fact two $1$'s are at least $m + 1$ apart by the Marker Lemma.

We define the aperiodic subwords, the \emph{AS}, of a point $A(E(x))$ as the maximal subwords of the form $wu_1w wu_2w \cdots wu_nw$ (an AS is, formally, a pair containing a word and the index at which it occurs in $A(E(x))$). We define the period bordering subwords, the \emph{PBS}, as the subwords $w_j t w_{1-j}$ (again, also remembering the location). A PBS of the form $w_0tw_1$ is called a \emph{left border}, and a PBS of the form $w_1tw_0$ is called a \emph{right border}. Finally, we define the long periodic subwords, the \emph{LPS}, as the rest of the maximal subwords not intersecting AS or PBS. 

For a sufficiently large choice of $r'$, the restriction $A : E(S^\Z) \cup A(E(S^\Z)) \to A(E(S^\Z))$ is an idempotent CA: First, note that changing $r'$ will only affect the first condition. Consider a rewriting that happens on the second step at $i$. This $i$ must be in an LPS if $r'$ is chosen large enough, since everywhere else, $w$ and thus $v$ occurs with bounded gaps after the application of $A$ by the Marker Lemma. Also, clearly for cells $i$ deep enough (at least $r + |w_0| + m + |w_1|$) inside a $p$-periodic subword with $p \leq m$, $M$ marks no cells with a $1$. It then clear that a large enough choice of $r'$ implies the idempotency of $A$.

Note that, since the length of a minimal $wuw$-pattern is bounded, a CA can determine which type of subword $i$ belongs to, that is, there exists a cellular automaton $T : S^\Z \to \{\mbox{(AS)}, \mbox{(PBS)}, \mbox{(LPS)}\}^\Z$ such that $T(A(E(x)))_i = \mathcal{T}$ if and only if $i$ is in a subword of type $\mathcal{T}$.

We illustrate the structure of a point $(A \circ E)(x)$ in Fig.~\ref{fig:AfterB}.

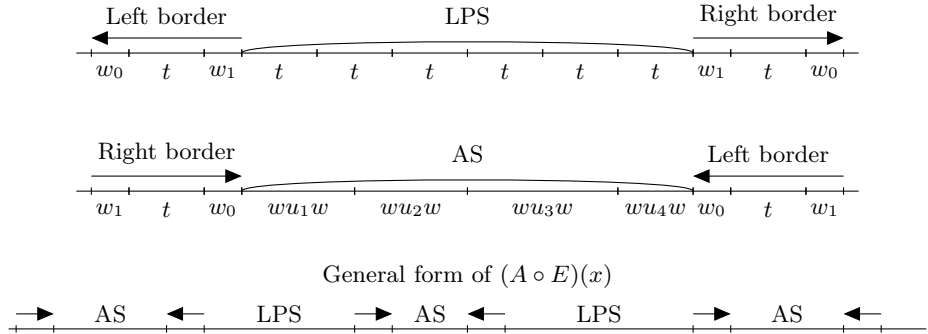
\begin{figure}
\caption{An LPS, an AS, and the general structure of a point, respectively, after $A \circ E$ has been applied. Note that in reality the bordermost copies of $t$ in an LPS are usually cut off (unlike in the figure), and the two $t$ sandwitched between $w_i$ are usually not the same, but rotated versions (conjugates) of each other.}
\begin{center}

\begin{tikzpicture}
  \path[use as bounding box] (-5.2,0.9) rectangle (5.2,-0.9);
  \draw(-5.2,0) -- (5.2,0);
  \foreach \a/\b/\t in {-5/-4.5/$w_0$,-4.5/-3.5/$t$,-3.5/-3/$w_1$,-3/-2/$t$,-2/-1/$t$,-1/0/$t$,0/1/$t$,1/2/$t$,2/3/$t$,3/3.5/$w_1$,3.5/4.5/$t$,4.5/5/$w_0$} {
    \draw(\a,-0.05) -- (\a,0.05);
    \draw(\b,-0.05) -- (\b,0.05);
    \pgfmathparse{0.5*(\a+\b)}
    \draw(\pgfmathresult,-0.25) node {\t};
  }

  \draw(-4,0.5) node{Left border};
  \draw[arrows = {-triangle 45}] (-3,0.2) -- (-5,0.2);
  \draw(0,0.5) node{LPS};
  \draw(-3,0) .. controls +(90:0.2) and +(90:0.2) .. (3,0);
  \draw(4,0.5) node{Right border};
  \draw[arrows = {-triangle 45}] (3,0.2) -- (5,0.2);
\end{tikzpicture}

\begin{tikzpicture}
  \path[use as bounding box] (-5.2,0.9) rectangle (5.2,-0.9);
  \draw(-5.2,0) -- (5.2,0);
  
   \foreach \a/\b/\t in {-5/-4.5/$w_1$,-4.5/-3.5/$t$,-3.5/-3/$w_0$,-3/-1.5/$wu_1w$,-1.5/0/$wu_2w$,0/2/$wu_3w$,2/3/$wu_4w$,3/3.5/$w_0$,3.5/4.5/$t$,4.5/5/$w_1$} {
    \draw(\a,-0.05) -- (\a,0.05);
    \draw(\b,-0.05) -- (\b,0.05);
    \pgfmathparse{0.5*(\a+\b)}
    \draw(\pgfmathresult,-0.25) node {\t};
  }
  
  \draw(-4,0.5) node{Right border};
  \draw[arrows = {-triangle 45}] (-5,0.2) -- (-3,0.2);
  \draw(0,0.5) node{AS};
  \draw(-3,0) .. controls +(90:0.2) and +(90:0.2) .. (3,0);
  \draw(4,0.5) node{Left border};
  \draw[arrows = {-triangle 45}] (5,0.2) -- (3,0.2);
\end{tikzpicture}

\begin{tikzpicture}
  \path[use as bounding box] (-5.2,0.9) rectangle (5.2,-0.5);
  \draw(-6.1,0) -- (6.1,0);
  
  \foreach \x in {-6,-5.5,-4,-3.5,-1.5,-1,0,0.5,3,3.5,5,5.5}
    \draw(\x,-0.05) -- (\x,0.05);
  \foreach \a/\b in {-6/-5.5,-1.5/-1,3/3.5}
    \draw[arrows = {-triangle 45}] (\a,0.2) -- (\b,0.2);
  \foreach \a/\b in {-4/-3.5,0/0.5,5/5.5}
    \draw[arrows = {-triangle 45}] (\b,0.2) -- (\a,0.2);
  \foreach \x/\t in {-4.75/AS,-2.5/LPS,-0.5/AS,1.75/LPS,4.25/AS}
    \draw(\x,0.2) node {\t};
  
  \draw(0,0.7) node{General form of $(A \circ E)(x)$};
  
\end{tikzpicture}

\end{center}
\label{fig:AfterB}
\end{figure}

\subsection{Periodic Subwords of Small Enough Period: $P$}

Now that LPS subwords can be detected by a CA in $(A \circ E)(x)$, we can deal with LPS separately from the rest of the point. So, let us construct a CA $P' \in \IDEMP^*$ which behaves like $G$ on all points with period less than or equal to $m$. We will then modify $P'$ to obtain the desired CA $P$ such that $P \circ A \circ E$ writes the LPS exactly the same way as $G$ would have rewritten the corresponding periodic point (while leaving the original periodic pattern $t$ in the period borders $w_j t w_{1-j}$).

We start with the following lemma, which contains all the essential ideas needed in the construction of $P'$.

\begin{lemma}
\label{lemma:ShiftlessCase}
Let $X$ be a finite set and let $f : X \to X$ not be surjective. Then there exist idempotent functions $f_i$ such that $f = f_n \circ \cdots \circ f_1$.
\end{lemma}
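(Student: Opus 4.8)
The plan is to reduce the statement to a purely combinatorial fact about functions on a finite set, namely that a non-surjective self-map is a composition of idempotent self-maps. First I would set up the standard reduction: since $f$ is not surjective, $|f(X)| < |X|$, and iterating, the image eventually stabilizes on a set $X_\infty$ on which $f$ restricts to a bijection (a permutation). So $f$ naturally splits into a ``collapsing'' part that pushes everything into $X_\infty$ and a ``permuting'' part that is a bijection of $X_\infty$. I would handle these two parts separately.

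For the permuting part: a permutation of a finite set is a product of transpositions, and each transposition of $X_\infty$ extends to an idempotent of $X$ only if we are careful — a transposition is an involution, not an idempotent. So instead I would express the permutation directly as a product of idempotents. The key observation here is that since $|X_\infty| < |X|$ (because $f$ is not surjective, $X_\infty$ is a proper subset of $X$, so there is at least one ``spare'' point $z \in X \setminus X_\infty$), I have room to maneuver: I can realize any single transposition $(a\ b)$ on $X_\infty$ as a product of a few idempotent maps on $X$ that use $z$ as scratch space — roughly, the idempotent collapsing $a \mapsto z$, then one swapping the roles appropriately, then one restoring — in the spirit of how one writes a transposition using a three-element ``rotation trick.'' Actually the cleanest route is: every permutation of a finite set with at least one fixed point (or spare element) is a product of idempotents, which one can see by conjugating the ``cyclic shift with a collapse'' construction. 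I would verify that a single $n$-cycle can be written as a composition like $g \circ h$ where $h$ is the idempotent collapsing one point onto its cyclic successor and $g$ is an idempotent restoring it, then note the general permutation is a product of cycles.

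For the collapsing part: the map sending $X$ onto $X_\infty$ while being identity on $X_\infty$, or more generally any map whose image is exactly $X_\infty$ and which fixes $X_\infty$ pointwise — I would argue such a map is itself (a product of) idempotents by decomposing it into ``elementary collapses,'' each merging the orbit structure one step at a time; a map that is identity on its image is by definition idempotent, so if I can arrange the collapsing factor to be identity on its image this step is immediate. Combining, $f = (\text{permuting part}) \circ (\text{collapsing part})$, each a product of idempotents, gives the result. The main obstacle I expect is the permutation part: idempotents are genuinely weaker than involutions, so the ``spare element'' $z$ is essential and the bookkeeping to turn a transposition or cycle into an explicit product of idempotents, while elementary, is the step that needs care — in particular making sure each factor I write down really satisfies $f_i^2 = f_i$ and that the composition telescopes to exactly the desired permutation without side effects on the rest of $X$.
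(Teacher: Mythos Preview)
Your high-level strategy matches the paper's: factor $f$ into a ``collapsing'' piece and a ``permutation'' piece, and implement the permutation as a product of transpositions, each realized by three idempotents using a spare element outside the image. That part is fine and is exactly what the paper does with its element $b\in X\setminus f(X)$ and its three idempotents $k_i$.

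The gap is in your choice of target set. You iterate to the eventual image $X_\infty$ and then claim $f=(\text{permutation of }X_\infty)\circ(\text{collapse onto }X_\infty)$. But any such composite has image contained in $X_\infty$, whereas $f$ only has image $f(X)$, which in general strictly contains $X_\infty$. Concretely, take $X=\{1,2,3\}$ with $f(1)=2$, $f(2)=3$, $f(3)=3$: here $X_\infty=\{3\}$, yet $f(1)=2\notin X_\infty$, so no factorization through a collapse onto $X_\infty$ can recover $f$. Your later remark about ``elementary collapses, one step at a time'' does not rescue this, because the obstruction is not that the collapse is hard to build---it is that the final composite lands in the wrong set.

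The paper avoids this by working with $f(X)$ rather than $X_\infty$. It picks a section $g:f(X)\to X$, first retracts $X$ onto the representative set $g(f(X))$ via the single-element collapses $a\mapsto g(f(a))$ (this is the idempotent $g\circ f$), then moves $g(f(X))$ onto $f(X)$ as a set by further collapses $h_i$, and only then uses the transposition trick to permute inside $f(X)$ so that each $a$ ends at $f(a)$. The key point is that the permutation lives on $f(X)$, where a spare element in $X\setminus f(X)$ is guaranteed by non-surjectivity; no iteration to $X_\infty$ is needed. If you replace $X_\infty$ by $f(X)$ and insert the section step, your plan goes through essentially as written.
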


\begin{proof}
First, choose a preimage $g(b) \in f^{-1}(b)$ for all $b \in f(X)$. Then, construct a sequence of idempotent functions $g_i$ that each move a single element $a \in X$ to $g(f(a))$, and leave everything else fixed. Next, move $g(f(X))$ onto $f(X)$ with another product of functions $h_i$. Finally, decompose the permutation of $f(X)$ moving every element $a \in X$ to its final position $f(a)$, into $2$-cycles. Each $2$-cycle can be implemented using three idempotent functions $k_i$ and an element $b \in X - f(X)$. Letting $f = \prod_i k_i \circ \prod_i h_i \circ \prod_i g_i$ completes the construction.
\end{proof}

\begin{lemma}
\label{lemma:PeriodicParts}
Let $m'$ be arbitrary, and let $Y'$ be a subshift such that $G|_{Y'}$ satisfies Equation~\ref{eq:IdIfNoWiggle}. Then there exists
\[ P' \in \IDEMP(Y')^*\]
such that $P'$ acts as $G$ on all points with period less than or equal to $m'$.
\end{lemma}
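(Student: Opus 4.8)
The plan is to reduce the statement to the finite combinatorial fact of Lemma~\ref{lemma:ShiftlessCase}, to carry out its proof \emph{equivariantly} at the level of periodic orbits, and to extend the resulting idempotent maps from the periodic points to all of $Y'$. To begin, note that the set $\Pi$ of points of $Y'$ of period at most $m'$ is finite, and that $G(\Pi)\subseteq\Pi$: since $G$ commutes with $\sigma$, the equality $\sigma^p x=x$ forces $\sigma^p G(x)=G(x)$, so $G$ never increases the least period; in particular $G|_\Pi$ is a shift-commuting self-map of the finite subshift $\Pi$, i.e.\ a cellular automaton on $\Pi$. If $G(\Pi)=\Pi$, a downward induction on $n\le m'$ concludes the proof with $P'=\mathrm{id}$: the only points of $\Pi$ mapping into $Q_n\cap Y'$ lie in the sets $Q_{kn}\cap Y'$ with $k\ge 1$, and those with $k\ge 2$ already map onto $Q_{kn}\cap Y'$ by the inductive hypothesis, so $G(Q_n\cap Y')\supseteq Q_n\cap Y'$ and hence, by finiteness, $G(Q_n\cap Y')=Q_n\cap Y'$, whence the hypothesis on $G|_{Y'}$ gives $G|_{Q_n\cap Y'}=\mathrm{id}$ for all $n\le m'$ and so $G|_\Pi=\mathrm{id}$. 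I may therefore assume $G(\Pi)\subsetneq\Pi$, so that $R:=\Pi\setminus G(\Pi)$ is a nonempty, shift-invariant union of orbits; this is the non-surjectivity that makes Lemma~\ref{lemma:ShiftlessCase} applicable.

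Next I would replay the proof of Lemma~\ref{lemma:ShiftlessCase}, replacing ``element'' by ``periodic orbit'' and ``move the element $a$ to $b$'' by ``the shift-commuting map sending the orbit of $a$ onto the orbit of $b$ along a fixed phase alignment and fixing every other orbit pointwise''. Such a map is a well-defined idempotent cellular automaton on $\Pi$ precisely when the least period of $b$ divides that of $a$, and this divisibility can be arranged throughout --- for the collapsing steps (moving each point to a chosen preimage of its $G$-image and then collapsing these onto $G(\Pi)$) by processing the orbits in order of decreasing period and using that $G$ only lowers periods, and for the steps realizing a transposition of two orbits by taking the ``scratch'' orbit of the same size from $R$, or, failing that, from among the periodic points of $Y'$ lying outside the current image (legitimate because the intermediate maps need only take values in $Y'$, not in $\Pi$), or, if even that is not enough, after first enlarging $\Pi$ to the still finite, still $G$-invariant set of all points whose period divides $\mathrm{lcm}(1,\dots,m')$, to which the same dichotomy applies. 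The result is a factorization $G|_\Pi=h_k\circ\cdots\circ h_1$ in which each $h_i$ is an idempotent cellular automaton of the uniform shape ``collapse finitely many orbits onto one fixed orbit along fixed phase alignments, and fix all other orbits''.

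It then remains to extend each $h_i$ to an idempotent cellular automaton $P_i'$ on $Y'$. One can try to do this by packaging $h_i$ as a surjective idempotent onto a mixing subshift of $Y'$ containing a unary point and applying the variant of Lemma~\ref{lemma:IdempotentExtensionLemma} for subshifts other than the full shift noted after its proof (its periodic-point hypothesis being vacuous since $Y'$ has a unary point); more concretely, one defines $P_i'$ by a local rule that rewrites every sufficiently long $p$-periodic run matching a source pattern of $h_i$ to the corresponding target pattern and leaves everything else unchanged. A short boundary analysis then shows that once such a run has been rewritten it is $q$-periodic for a proper divisor $q$ of $p$ and so no longer matches any source pattern, which makes $P_i'$ idempotent on \emph{all} of $Y'$; and $P_i'|_\Pi=h_i$ because on a genuinely periodic point every cell is interior to such a run. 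Composing, $P':=P_k'\circ\cdots\circ P_1'\in\IDEMP(Y')^*$ and $P'|_\Pi=h_k\circ\cdots\circ h_1|_\Pi=G|_\Pi$, which is exactly the assertion.

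The step I expect to be the main obstacle is making the equivariant bookkeeping compatible with the extension: a transposition step wants a scratch orbit of a prescribed size, while the extension step wants each building block in a shape that provably extends to an automaton idempotent on the whole of $Y'$ (a plain rewriting rule, for instance, cannot implement a cyclic rearrangement \emph{within} a single orbit, so such rearrangements must be routed through scratch orbits of the matching size). Keeping every intermediate map in the single shape ``collapse some orbits onto a fixed orbit, fix the rest'' is what lets both constraints be met, and the two verifications --- that this shape suffices to realize $G|_\Pi$, which is exactly where the non-surjectivity and the hypothesis \eqref{eq:IdIfNoWiggle} (ruling out, among other things, $G$ acting as a nontrivial permutation on any $Q_n$) are spent, and that it extends idempotently --- are the heart of the argument.
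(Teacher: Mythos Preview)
Your overall strategy --- reduce to Lemma~\ref{lemma:ShiftlessCase} at the level of shift-orbits and then extend each factor to $Y'$ --- is the paper's strategy too. The substantive difference is organizational, and it is exactly where your argument has a gap. The paper does \emph{not} run Lemma~\ref{lemma:ShiftlessCase} on all of $\Pi$ at once; it processes the periods $1,2,\ldots,m'$ in increasing order, writing $P'=P'_{m'}\circ\cdots\circ P'_1$ with $P'_i$ touching only $Q_i\cap Y'$. At stage $i$ either $G(Q_i\cap Y')=Q_i\cap Y'$ (whence $P'_i=\mathrm{id}$ by the hypothesis) or there is a period-$i$ orbit outside $G(Q_i\cap Y')$, and \emph{that} orbit is the scratch for every transposition and phase-correction needed at period $i$. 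The point is that this orbit may well lie in $G(\Pi)$ (as the image of some higher-period orbit) and still be available, because at stage $i$ nothing of period $i$ occupies it yet.

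Your all-at-once version does not secure scratch of the right period. Take $Y'=\{0,1,2\}^{\Z}$, with its three period-$2$ orbits $O_1,O_2,O_3$, and let $G$ satisfy $G(O_1)=O_2$, $G(O_2)=O_1$, $G(O_3)\in Q_1$, and $G(O_4)=O_3$ for some period-$4$ orbit $O_4$. Then $G(Q_2)\neq Q_2$, so \eqref{eq:IdIfNoWiggle} is not violated, yet the permutation step of your orbit-level Lemma~\ref{lemma:ShiftlessCase} must transpose $O_1$ and $O_2$. All three period-$2$ orbits lie in $G(\Pi)$, so $R$ contains none; $Y'$ has no further period-$2$ points; and enlarging $\Pi$ to periods dividing $\mathrm{lcm}(1,\ldots,m')$ adds no new period-$2$ orbits. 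None of your three fallbacks yields a period-$2$ scratch. The paper's ordering fixes this automatically: at stage $2$ one uses $O_3$ (not in $G(Q_2)$) as scratch, and only at stage $4$ does $O_4$ collapse onto it. A smaller slip: in your extension paragraph you assert that the rewritten run becomes $q$-periodic for a \emph{proper} divisor $q\mid p$, but in the transposition and phase steps source and target have the same period, so idempotency of the extension must be argued differently (the target pattern is simply not among that factor's source patterns).
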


\begin{proof}
There exists $k'$ such that by looking $k'$ cells in each direction, we can uniquely identify the period of the point. We build $P'$ as the product $P_{m'}' \circ \cdots \circ P_1'$ where each $P_i'$ takes care of points with period $i$. If $G(Q_i \cap Y') = Q_i \cap Y'$ then $P_i'$ is just the identity. All points that map to a point of smaller period simply map directly to that point. This is safe because of the order in which we handle the different periods, since the period of a point cannot be increased by a cellular automaton.

We deal with other points similarly to Lemma~\ref{lemma:ShiftlessCase}, simply shuffling everything in place with a product of idempotents. For this, note that $Q_i \cap Y'$ are partitioned into equivalence classes of size $i$ by the shift, and that an equivalence class either maps to a set of points with smaller period or onto some equivalence class, possibly shifted. This means that the construction in Lemma~\ref{lemma:ShiftlessCase} can be used on equivalence classes: In the terminology of Lemma~\ref{lemma:ShiftlessCase}, the functions $g_i$ are composed with a suitable power of the shift, and finally, additional cellular automata $l_i$ are used to shift the images of all points to their final image (again using a point outside of $f(X)$).
\end{proof}

Let the CA $P' = P_h \circ \cdots \circ P_1$ be given by Lemma~\ref{lemma:PeriodicParts} for $m' = m$ and $Y' = \{x \in S^\Z \;|\; w \notin \lang{x}\}$, where each $P_i$ is idempotent. It is easy to show that if $G$ satisfies Equation~\ref{eq:IdIfNoWiggle} on the full shift, the equation is also satisfied on $Y'$.

To extend $P'$ to $P$, we must make each $P_i$ identify whether the cell being rewritten is part of an LPS. This is complicated by the fact that the intermediate CA $P_i$ may have $v$ in their image. However, since $w$ does not occur in the images of the $P_i$, AS subwords, and thus all types of subwords, are still easy to locate, and the CA $T$ can be extended for this case. If $i$ is not in an LPS, the cell is not rewritten. Otherwise, the cell is rewritten as $P'$ would have, if the periodic pattern were repeated infinitely in both directions. That is, the bordermost $w_1$, if seen, is thought of as a repeater, repeating whichever periodic pattern occurs inside the LPS, see Fig.~\ref{fig:Repeater}. This is possible, since at least $k$ cells are left between the period borders $w_j t w_{1-j}$, and the repeated pattern can be uniquely determined. This concludes the construction of $P$. Note that, as mentioned above, it is enough that the intermediate CA are idempotent on the image $Z$ of the previous chain of CA and their own image from $Z$ by Lemma~\ref{lemma:IdempotentExtensionLemma}.

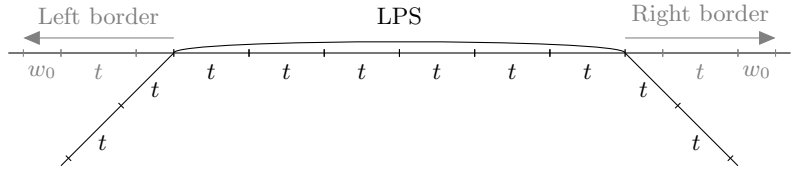
\begin{figure}
\caption{An LPS as seen by $P'$ in $(A \circ E)(x)$.}
\begin{center}

\begin{tikzpicture}
  \path[use as bounding box] (-5.2,0.5) rectangle (5.2,-1.5);
  \draw(-5.2,0) -- (5.2,0);
  \foreach \a/\b/\t in {-5/-4.5/$w_0$,-4.5/-3.5/$t$,-3.5/-3/,3/3.5/,3.5/4.5/$t$,4.5/5/$w_0$} {
    \draw[black!50](\a,-0.05) -- (\a,0.05);
    \draw[black!50](\b,-0.05) -- (\b,0.05);
    \pgfmathparse{0.5*(\a+\b)}
    \draw[black!50](\pgfmathresult,-0.25) node {\t};
  }
  \foreach \a/\b/\t in {-3/-2/$t$,-2/-1/$t$,-1/0/$t$,0/1/$t$,1/2/$t$,2/3/$t$} {
    \draw(\a,-0.05) -- (\a,0.05);
    \draw(\b,-0.05) -- (\b,0.05);
    \pgfmathparse{0.5*(\a+\b)}
    \draw(\pgfmathresult,-0.25) node {\t};
  }
  \draw(-3.0,0) --node[midway,below right=-2]{$t$} (-3.7,-0.7);
  \draw(-3.7,-0.7) +(135:0.05) -- +(135:-0.05);
  \draw(-3.7,-0.7) --node[midway,below right=-2]{$t$} (-4.4,-1.4);
  \draw(-4.4,-1.4) +(135:0.05) -- +(135:-0.05);
  \draw(-4.4,-1.4) -- (-4.5,-1.5);
  
  \draw(3.0,0) --node[midway,below left=-2]{$t$} (3.7,-0.7);
  \draw(3.7,-0.7) +(45:0.05) -- +(45:-0.05);
  \draw(3.7,-0.7) --node[midway,below left=-2]{$t$} (4.4,-1.4);
  \draw(4.4,-1.4) +(45:0.05) -- +(45:-0.05);
  \draw(4.4,-1.4) -- (4.5,-1.5);

  \draw[black!50](-4,0.5) node{Left border};
  \draw[black!50, arrows = {-triangle 45}] (-3,0.2) -- (-5,0.2);
  \draw(0,0.5) node{LPS};
  \draw(-3,0) .. controls +(90:0.2) and +(90:0.2) .. (3,0);
  \draw[black!50](4,0.5) node{Right border};
  \draw[black!50, arrows = {-triangle 45}] (3,0.2) -- (5,0.2);
\end{tikzpicture}
\end{center}
\label{fig:Repeater}
\end{figure}

The only difference in form between $(P \circ A \circ E)(x)$ and $(A \circ E)(x)$ is that the repeating subword of an LPS may have changed, see Fig.~\ref{fig:AfterP}.

\begin{figure}
\caption{An LPS after applying $P$ to $(A \circ E)(x)$.}
\begin{center}

\begin{tikzpicture}
  \path[use as bounding box] (-5.2,0.5) rectangle (5.2,-0.5);
  \draw(-5.2,0) -- (5.2,0);
  \foreach \a/\b/\t in {-5/-4.5/$w_0$,-4.5/-3.5/$t$,-3.5/-3/$w_1$,-3/-2/$t'$,-2/-1/$t'$,-1/0/$t'$,0/1/$t'$,1/2/$t'$,2/3/$t'$,3/3.5/$w_1$,3.5/4.5/$t$,4.5/5/$w_0$} {
    \draw(\a,-0.05) -- (\a,0.05);
    \draw(\b,-0.05) -- (\b,0.05);
    \pgfmathparse{0.5*(\a+\b)}
    \draw(\pgfmathresult,-0.25) node {\t};
  }
 
  \draw(-4,0.5) node{Left border};
  \draw[arrows = {-triangle 45}] (-3,0.2) -- (-5,0.2);
  \draw(0,0.5) node{LPS};
  \draw(-3,0) .. controls +(90:0.2) and +(90:0.2) .. (3,0);
  \draw(4,0.5) node{Right border};
  \draw[arrows = {-triangle 45}] (3,0.2) -- (5,0.2);
\end{tikzpicture}
\end{center}
\label{fig:AfterP}
\end{figure}
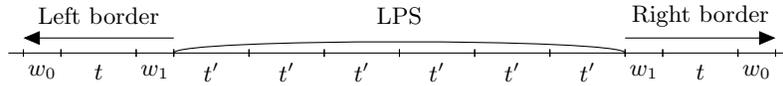

\subsection{The Final Touch: $F$}

Let $l$ be such that if $x_{[i-l,i+l]}$ does not contain $v$ for $x \in S^\Z$, then $(P \circ A \circ E)(x)_i = G(x)_i$. For instance, $l = \max(|w_0|+|w_1|)+r$ has this property, where $r$ is the radius of $P$. All we need to do is rewrite the rest of $x$ as $G$ would have, with a CA $F$. We ensure that $F$ is idempotent by only rewriting $i$ such that $[i - l, i + l]$ contains $v$, since $G(x)$ cannot contain a copy of $v$. But it is easy to deduce the original contents of any cell $j$ that $G$ might use when rewriting such a cell $i$:
\begin{itemize}
\item in an AS, between two $w$, the original contents are given by simply decoding $wuw$.
\item shallow enough inside an LPS, or in a PBS, the $t$ in $w_j t w_{1-j}$ gives the original periodic pattern repeated in $x$.
\end{itemize}

Now,
\[ G = F \circ P \circ A \circ E \]
concludes the proof of Theorem~\ref{theorem:Characterization}.

\section{Examples and Decidability Questions}

\subsection{Examples}

It is now easy to see that while idempotent CA are in some sense trivial, their products can have complicated behavior.

We say that a CA is \emph{nilpotent} if $\exists q, n: \forall x \in S^\Z: G^n(x) = {}^\infty q^\infty$, and we say the CA $F$ has a \emph{spreading state} $q$ if $q$ spreads to $i$ whenever $F$ sees $q$ in the neighborhood of $i$.

\begin{proposition}
\label{proposition:SpreadingState}
If the CA $F$ has a spreading state, has neighborhood size at least $2$, and is constant on unary points, then $F \in \IDEMP^*$.
\end{proposition}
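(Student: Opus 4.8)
The plan is to verify the two implications of Equation~\eqref{eq:IdIfNoWiggle} and then invoke Theorem~\ref{theorem:Characterization}. First I would handle the rightmost implication: I claim $F$ is not surjective, so the hypothesis $G(S^\Z) = S^\Z$ is simply never met. Indeed, with a spreading state $q$ and neighborhood of size at least two, the configuration $\ldots$ consisting of a long block $q^N$ with a single non-$q$ symbol adjacent to it has no preimage once $N$ exceeds the diameter of the neighborhood: any preimage would have to be all-$q$ in a window around the non-$q$ cell, forcing that cell to also be $q$ by spreading. (Equivalently, $F$ fails preinjectivity by the same local argument, so the Garden of Eden theorem applies; but the direct argument is cleaner here.) Hence the rightmost conjunct of \eqref{eq:IdIfNoWiggle} holds vacuously.

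Next I would handle the leftmost implication: $\forall n: (F(Q_n) = Q_n \implies F|_{Q_n} = \mbox{id}|_{Q_n})$. I would show the premise $F(Q_n) = Q_n$ forces $n = 1$. For $n = 1$ this is where the hypothesis ``$F$ is constant on unary points'' is used: $F$ maps each ${}^\infty a^\infty$ to some ${}^\infty b^\infty$, and $F(Q_1) = Q_1$ with $Q_1$ finite means $F$ permutes the unary points; but being constant on each unary point just says $F({}^\infty a^\infty)$ is unary, which is automatic — the content of the hypothesis is rather that $F$ acts on $Q_1$, and combined with $F(Q_1)=Q_1$ I must still rule out a nontrivial permutation. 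Here the spreading state does the job: $q$ is a fixed point (${}^\infty q^\infty \mapsto {}^\infty q^\infty$ by spreading), and more importantly, for $n \geq 2$, take $x \in Q_n$; a nonempty proper subword equal to $q$ on a long enough block will, under $F$, have $q$ spread, so the image cannot have least period $n$ unless $x$ itself has no long $q$-block, but periodicity forces either no $q$ at all or $q$ appearing with period $n$, and in the latter case for $n$ large the gaps are too short to avoid spreading. I would make this precise: if $x \in Q_n$ contains $q$, then $F(x)$ contains a block of $q$'s at least as long as the original $q$-run, and iterating the periodicity, $F(x)$ is not in $Q_n$ for $n$ large; the finitely many small $n$ are checked by hand using that a point of period $n < $ (neighborhood diameter) containing $q$ maps to ${}^\infty q^\infty \in Q_1$. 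So the only way $F(Q_n) = Q_n$ is $n = 1$, and then I must show $F|_{Q_1} = \mbox{id}$: since ${}^\infty q^\infty$ is fixed and $F$ permutes the finite set $Q_1$, if the permutation were nontrivial some ${}^\infty a^\infty$ with $a \neq q$ would be in a cycle; but consider a point that is ${}^\infty a^\infty$ except for one $q$ — its image has $q$ spread, yet approximating, $F$ is continuous and $F({}^\infty a^\infty) = {}^\infty c^\infty$ must be compatible with $q$ spreading into it, forcing $c = q$, contradiction unless $a = q$. Hence $F|_{Q_1} = \mbox{id}|_{Q_1}$.

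With both conjuncts of \eqref{eq:IdIfNoWiggle} established, Theorem~\ref{theorem:Characterization} gives $F \in \IDEMP^*$, completing the proof. The main obstacle I anticipate is the bookkeeping in the periodic case: cleanly arguing that a spreading state is incompatible with $F(Q_n) = Q_n$ for every $n \geq 2$ requires separating the case of large $n$ (where a $q$-run in a periodic point is eventually long enough to trigger spreading and destroy the period, so $F(x) \notin Q_n$) from the finitely many small $n$ (handled directly), and making sure the ``constant on unary points'' hypothesis is invoked exactly where needed — namely to pin down $F|_{Q_1}$ once we know $F$ permutes $Q_1$. None of the steps is deep; the care is entirely in the case analysis on the period relative to the neighborhood size.
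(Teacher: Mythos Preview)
Your overall plan --- verify both conjuncts of \eqref{eq:IdIfNoWiggle} and invoke Theorem~\ref{theorem:Characterization} --- matches the paper, and your treatment of non-surjectivity is essentially the paper's (the paper goes via failure of preinjectivity and the Garden of Eden Theorem; your direct no-preimage argument is shaky since $F(y)_j=q$ does not force $y_j=q$, but you also offer the preinjectivity route, which is fine).

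The real gap is your reading of ``constant on unary points''. You interpret it as ``$F({}^\infty a^\infty)$ is unary'', which is automatic for any CA, and then spend the rest of the argument trying to squeeze $F|_{Q_1}=\mbox{id}$ out of the spreading property alone. That cannot work: take $S=\{q,a,b\}$, neighborhood $\{0,2\}$, local rule $f(s,s')=q$ if $q\in\{s,s'\}$, $f(a,a)=b$, $f(b,b)=a$, and $f(a,b),f(b,a)$ arbitrary; this $F$ has a spreading state and neighborhood size $2$ but swaps ${}^\infty a^\infty \leftrightarrow {}^\infty b^\infty$ (and in fact permutes $Q_2$ nontrivially as well). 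The intended meaning is that $F$ is a \emph{constant function} on $Q_1$: every ${}^\infty a^\infty$ maps to the same point, necessarily ${}^\infty q^\infty$. With that reading $F(Q_1)=\{{}^\infty q^\infty\}\subsetneq Q_1$ whenever $|S|\geq 2$, so the $n=1$ implication is vacuous --- no work needed.

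The same misreading leaves a hole for $n\geq 2$. Your small-$n$ claim (``period $n$ below the neighborhood diameter and containing $q$ forces image ${}^\infty q^\infty$'') fails when the neighborhood is not an interval (e.g.\ $N=\{0,4\}$, $n=3$). The clean dichotomy is on $\bar N := N \bmod n$. If $|\bar N|\geq 2$, the point ${}^\infty(q\,a^{n-1})^\infty\in Q_n$ gains at least one $q$ under $F$, while $\#_q$ is never decreased by spreading; summing over $Q_n$ rules out $F$ permuting $Q_n$. If $|\bar N|=1$, every cell of an $n$-periodic point sees a constant tuple, and now ``constant on unary points'' (correctly read) gives $F(x)={}^\infty q^\infty$ outright. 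So this hypothesis is doing work for certain $n\geq 2$ as well, not only for $n=1$; the paper's one-line ``no $Q_n$ with $n>1$ maps to itself'' is using it implicitly.
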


\begin{proof}
Such a CA cannot be preinjective, and thus not surjective either, so the rightmost condition of Theorem~\ref{theorem:Characterization} is satisfied. Also, no $Q_n$ where $n > 1$ maps to itself.
\end{proof}

\begin{proposition}
If a non-surjective CA $F$ has only one spatially and temporally periodic point, then $F \in \IDEMP^*$.
\end{proposition}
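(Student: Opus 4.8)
The plan is to verify that $F$ satisfies Equation~\eqref{eq:IdIfNoWiggle} and then invoke Theorem~\ref{theorem:Characterization}. Since $F$ is assumed non-surjective, $F(S^\Z) = S^\Z$ fails, so the rightmost implication of \eqref{eq:IdIfNoWiggle} holds vacuously. All the content is in the leftmost implication: for each $n$, assuming $F(Q_n) = Q_n$, one must show $F|_{Q_n} = \mbox{id}|_{Q_n}$.

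The key observation is that $Q_n$ is a finite set, as its points are determined by their restriction to $[0, n-1]$. Hence if $F(Q_n) = Q_n$, then $F|_{Q_n}$ is a bijection of a finite set, i.e.\ a permutation, and therefore has finite order: there is $k \geq 1$ with $F^k|_{Q_n} = \mbox{id}|_{Q_n}$. Consequently every $x \in Q_n$ satisfies $F^k(x) = x$, so $x$ is temporally periodic, and it is spatially periodic by the definition of $Q_n$. Thus \emph{every} point of $Q_n$ is a spatially and temporally periodic point of $F$.

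Now I would use the hypothesis that $F$ has only one such point, which forces $|Q_n| \leq 1$. If $Q_n = \emptyset$, the implication $F(Q_n) = Q_n \implies F|_{Q_n} = \mbox{id}|_{Q_n}$ is trivial. If $Q_n = \{x\}$, then since the shift preserves least period we have $\sigma(x) \in Q_n$, hence $\sigma(x) = x$; so $x$ is unary and $n = 1$, which forces $|S| = 1$. In that degenerate case $S^\Z$ is a single point and $F = \mbox{id} \in \IDEMP^*$ already, so the proposition holds in any case; otherwise $F(Q_n) = Q_n$ simply never occurs for a nonempty $Q_n$, and again $F|_{Q_n} = \mbox{id}|_{Q_n}$ whenever $F(Q_n) = Q_n$. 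Either way \eqref{eq:IdIfNoWiggle} holds and Theorem~\ref{theorem:Characterization} yields $F \in \IDEMP^*$.

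I do not expect a genuine obstacle here: the proposition is a short corollary of Theorem~\ref{theorem:Characterization} once ``temporally periodic'' is read as ``$F^k(x) = x$ for some $k \geq 1$'' rather than merely ``eventually periodic under $F$'' (under the latter reading every spatially periodic point is temporally periodic and the statement would collapse to the trivial one-letter case). The only points needing a little care are the finiteness of $Q_n$, which is what turns $F(Q_n) = Q_n$ into a statement about permutations of finite order, and the two edge cases above ($Q_n$ empty, and $|S| = 1$).
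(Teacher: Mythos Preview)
Your proof is correct and follows the natural route the paper leaves implicit (the proposition is stated without proof): verify \eqref{eq:IdIfNoWiggle} by using finiteness of $Q_n$ to turn $F(Q_n)=Q_n$ into a finite permutation, so that every point of $Q_n$ is spatially and temporally periodic and hence $|Q_n|\leq 1$. One minor simplification: once $|Q_n|\leq 1$ you are done immediately, since any self-bijection of a set with at most one element is the identity; the detour through $n=1$ and $|S|=1$ is unnecessary (and in fact $|S|=1$ would contradict non-surjectivity, so that branch is vacuous anyway).
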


By considering north-west deterministic tilings, we find a non-nilpotent cellular automaton on the full shift having a spreading state such that all periodic configurations eventually evolve into the all zero configuration \cite{Ka05}. Such CA are rather nontrivial to construct, and are thus interesting examples of CA in $\IDEMP^*$.

Note that an idempotent CA is simply an eventually periodic automaton with period $1$ and threshold $1$. We say that a CA $G$ is eventually idempotent if the period is $1$, but the threshold need not be, that is, $G^{m + 1} = G^m$ for some $n$. Let us show that such CA are products of idempotent CA.

\begin{proposition}
If $G^{m + 1} = G^m$, then $G \in \IDEMP^*$
\end{proposition}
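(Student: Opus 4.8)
The plan is to verify that an eventually idempotent CA $G$ (one with $G^{m+1} = G^m$) satisfies Equation~\eqref{eq:IdIfNoWiggle}, and then invoke Theorem~\ref{theorem:Characterization}. So the entire task reduces to checking the two implications of the characterization for such a $G$.

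First I would handle the surjectivity condition. Suppose $G(S^\Z) = S^\Z$. Then $G^m(S^\Z) = S^\Z$ as well, since a composition of surjections is surjective. Now $G$ restricted to the (compact) set $G^m(S^\Z) = S^\Z$ is a continuous bijection onto $G^{m+1}(S^\Z) = G^m(S^\Z)$ — wait, bijectivity needs injectivity, which I should get from the equation: on $G^m(S^\Z)$, $G$ acts as the identity because $G \circ G^m = G^m$ means $G$ fixes every point of $G^m(S^\Z)$ pointwise. Since $G^m(S^\Z) = S^\Z$, this gives $G = \mathrm{id}$ directly. So the rightmost implication holds essentially for free.

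Next, the periodic-point condition. Fix $n$ and suppose $G(Q_n) = Q_n$. Since $Q_n$ is finite and $G$ cannot increase the least period of a configuration (a standard fact: applying a CA to a $p$-periodic point yields a point whose period divides $p$), $G$ maps $Q_n$ into $\bigcup_{d \mid n} Q_d$; combined with $G(Q_n) = Q_n$ and finiteness, $G$ restricts to a permutation of $Q_n$. Then $G^m$ also restricts to a permutation of $Q_n$, and on the image $G^m(Q_n) = Q_n$ the relation $G \circ G^m = G^m$ forces $G$ to fix every point of $Q_n$. Hence $G|_{Q_n} = \mathrm{id}|_{Q_n}$, which is the leftmost implication. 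The key point throughout is simply that $G^{m+1} = G^m$ makes $G$ act as the identity on $\mathrm{im}(G^m)$, and that $\mathrm{im}(G^m)$ exhausts $Q_n$ (resp.\ $S^\Z$) precisely in the cases the characterization asks about.

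The main obstacle, such as it is, is making sure the finiteness and period-non-increasing arguments are correctly stated — in particular that $G(Q_n) = Q_n$ genuinely upgrades to ``$G$ permutes $Q_n$'' rather than merely ``$G$ surjects $Q_n$ onto $Q_n$'': this follows because a surjection of a finite set onto itself is a bijection. Once that is in hand, the conclusion is immediate from Theorem~\ref{theorem:Characterization}, and there is essentially nothing computational to do. I would write the proof in three sentences: reduce to checking \eqref{eq:IdIfNoWiggle}, dispatch the surjective case, dispatch the periodic case, cite the theorem.

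\begin{proof}
By Theorem~\ref{theorem:Characterization} it suffices to check that $G$ satisfies \eqref{eq:IdIfNoWiggle}. The relation $G^{m+1} = G^m$ says precisely that $G$ acts as the identity on $G^m(S^\Z)$. If $G(S^\Z) = S^\Z$, then $G^m(S^\Z) = S^\Z$, so $G = \mathrm{id}$. Now fix $n$ with $G(Q_n) = Q_n$. Since a cellular automaton cannot increase the least period of a configuration and $Q_n$ is finite, $G$ restricts to a permutation of $Q_n$; hence $G^m(Q_n) = Q_n$, and as $G$ is the identity on $G^m(S^\Z) \supseteq Q_n$ we get $G|_{Q_n} = \mathrm{id}|_{Q_n}$. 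Thus \eqref{eq:IdIfNoWiggle} holds and $G \in \IDEMP^*$.
\end{proof}
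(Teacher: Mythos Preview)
Your proof is correct and follows essentially the same route as the paper: verify Equation~\eqref{eq:IdIfNoWiggle} by observing that $G^{m+1}=G^m$ makes $G$ the identity on $G^m(S^\Z)$, and that $G(Q_n)=Q_n$ (resp.\ $G(S^\Z)=S^\Z$) forces $G^m(Q_n)=Q_n$ (resp.\ $G^m(S^\Z)=S^\Z$) by finiteness (resp.\ iterated surjectivity), then invoke Theorem~\ref{theorem:Characterization}. One small remark: the period--non-increasing fact you cite is not actually needed, since the hypothesis $G(Q_n)=Q_n$ already says $G$ maps $Q_n$ onto $Q_n$, and finiteness alone upgrades this to a bijection.
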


\begin{proof}
The proof of Lemma~\ref{lemma:OnlyIf} can easily be modified for such CA: If $G(Q_n) = Q_n$ and $G$ is eventually idempotent, we have $G^m(Q_n) = Q_n$ for all $i$. From this, it follows that for all $x \in Q_n$, we have $G(x) = G(G^m(y)) = G^m(y) = x$ for some $y \in Q_n$. The right hand side of Equation~\ref{eq:IdIfNoWiggle} follows similarly.
\end{proof}

\begin{corollary}
If $G$ is a product of eventually periodic CA, then $G \in \IDEMP^*$.
\end{corollary}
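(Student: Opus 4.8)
The plan is to deduce this at once from the preceding proposition, using only the formal fact that $\IDEMP^*$ is closed under composition. First I would recall that, by the definition of $\mathcal{CLS}^*$, the class $\IDEMP^*$ consists of all finite products of idempotent CA on $S^\Z$; in particular it contains $\mathrm{id}$ (the empty product) and is closed under $\circ$, since concatenating a factorization of $F$ into idempotents with one of $G$ into idempotents is a factorization of $F\circ G$ into idempotents. Thus $\IDEMP^*$ is a submonoid of the monoid of all cellular automata on $S^\Z$ under composition.

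Now let $G = H_k\circ\cdots\circ H_1$ with each $H_i$ eventually idempotent, i.e.\ $H_i^{m_i+1}=H_i^{m_i}$ for some $m_i$ (this is the reading of ``eventually periodic'' relevant here: arbitrary threshold, period one). By the preceding proposition each $H_i$ lies in $\IDEMP^*$, say $H_i = E_{i,\ell_i}\circ\cdots\circ E_{i,1}$ with every $E_{i,j}$ idempotent on $S^\Z$. Substituting these factorizations into the product defining $G$ exhibits $G$ as a product of idempotent CA, so $G\in\IDEMP^*$, as claimed.

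Accordingly I do not expect a real obstacle: all the substance sits in the preceding proposition, which in turn rests on Theorem~\ref{theorem:Characterization}. The one point to be careful about is the period-one assumption on the factors. A cellwise application of a nontrivial permutation of the alphabet is (eventually) periodic and surjective but not the identity, so by the surjectivity clause of Theorem~\ref{theorem:Characterization} it is \emph{not} in $\IDEMP^*$; hence the factors in the hypothesis must be understood to be eventually \emph{idempotent}, not merely eventually periodic, for the conclusion to hold. With that reading the corollary is immediate, and no further work beyond the observations above is needed.
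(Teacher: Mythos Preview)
Your argument is correct and is precisely the intended one: the paper states this as an immediate corollary with no proof, and the only content is that $\IDEMP^*$ is a submonoid together with the preceding proposition. You also correctly flag the terminology slip---the paper's own remark two lines later (``we have exactly characterized the products of eventually periodic CA with period $1$ and an arbitrary threshold'') confirms that the factors are meant to be eventually \emph{idempotent}, not eventually periodic in the general sense, and your counterexample shows the literal statement is false otherwise.
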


\begin{corollary}
Any nilpotent CA $F$ is in $\IDEMP^*$.
\end{corollary}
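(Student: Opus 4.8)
The plan is to reduce the statement to the immediately preceding proposition, which asserts that any $G$ with $G^{m+1} = G^m$ lies in $\IDEMP^*$. Concretely, I would show that every nilpotent CA $F$ satisfies $F^{n+1} = F^n$, where $n$ is the exponent from the definition of nilpotency, and then invoke that proposition.

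First I would unwind the definition: if $F$ is nilpotent with $F^n(x) = {}^\infty q^\infty$ for all $x \in S^\Z$, then in particular $F^{n+1}(x) = {}^\infty q^\infty$ as well. Writing $F^{n+1}(x) = F(F^n(x)) = F({}^\infty q^\infty)$, which is independent of $x$, we obtain $F({}^\infty q^\infty) = {}^\infty q^\infty$; that is, the terminal uniform configuration is a fixed point of $F$. (This also follows simply from the fact that $F$ commutes with the shift, so $F({}^\infty q^\infty)$ is again a unary point, and applying $F^n$ to it must yield ${}^\infty q^\infty$.) Consequently $F^n$ is the constant map onto a fixed point of $F$, hence idempotent, and therefore $F^{n+1} = F \circ F^n = F^n$.

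With $F^{n+1} = F^n$ established, the preceding proposition applies directly and yields $F \in \IDEMP^*$. There is essentially no obstacle; the only point requiring (trivial) care is the verification that the terminal uniform configuration is genuinely $F$-invariant, which is exactly what lets us pass from ``eventually constant'' to ``eventually idempotent''. Alternatively, one could note that a nilpotent CA is itself an eventually periodic CA and appeal to the previous corollary, but the direct argument above is cleaner.
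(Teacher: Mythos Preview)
Your argument is correct and matches the paper's intended derivation: the corollary is stated without proof precisely because a nilpotent CA trivially satisfies $F^{n+1}=F^n$, placing it under the preceding proposition. One minor simplification: you can obtain $F^{n+1}=F^n$ in one line by writing $F^{n+1}(x)=F^n(F(x))={}^\infty q^\infty=F^n(x)$, without first isolating the fixed-point property of ${}^\infty q^\infty$.
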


This means that we have exactly characterized the products of eventually periodic CA with period $1$ and an arbitrary threshold. As we mentioned in Section~\ref{sec:Intro}, the case of period $2$ and threshold $0$ is still open.

\subsection{Decidability Questions}

Although we have complicated examples of CA in $\IDEMP^*$, the problem of whether a CA is in this class is simple to solve using our characterization.

\begin{theorem}
It is decidable whether the CA $F$ is in $\IDEMP^*$.
\end{theorem}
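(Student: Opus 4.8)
The plan is to reduce the membership problem to Equation~\eqref{eq:IdIfNoWiggle}, which by Theorem~\ref{theorem:Characterization} is equivalent to $F \in \IDEMP^*$, and then to show each conjunct of that equation is decidable given a local rule for $F$. The right-hand conjunct is the easy one: surjectivity of a one-dimensional cellular automaton on the full shift is decidable (by Amoroso--Patt, or by the classical characterization via the de Bruijn / Fischer transition diagram), and if $F$ is surjective one additionally checks whether $F = \mathrm{id}$, which is just comparing the local rule of $F$ against the projection rule on a window of the appropriate radius. So ``$F(S^\Z) = S^\Z \implies F = \mathrm{id}$'' is decidable.

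The left-hand conjunct, $\forall n\colon (F(Q_n) = Q_n \implies F|_{Q_n} = \mathrm{id}|_{Q_n})$, appears to quantify over infinitely many $n$, so the first key step is to bound the relevant $n$. Here I would use that the period of a point cannot be increased by a cellular automaton: $F(Q_n) \subseteq \bigcup_{d \mid n \text{ or } d \le n}\! Q_d$ in a controlled way, and more to the point, $F$ permutes the finite set of points of least period exactly $n$ only if it maps each shift-orbit of size $n$ onto a shift-orbit of size $n$. Since a CA of radius $r$ acts on an $n$-periodic point through its values on windows of length $2r+1$, for $n > 2r+1$ (say) the behaviour of $F$ on $Q_n$ is ``generic'' and can be analyzed uniformly: one shows there is a computable bound $N_0$ (depending only on $r$ and $|S|$) such that for all $n > N_0$, $F(Q_n) = Q_n$ already fails, or else $F$ restricted to such large periodic orbits is forced to be a shift, which by the spatially-periodic analysis reduces to the $n \le N_0$ case or to checking $F = \mathrm{id}$. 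The honest obstacle is making this bound explicit: the cleanest route is to observe that $F(Q_n)=Q_n$ forces $F$ to be ``bijective on periodic points of period $n$,'' hence preinjective when restricted to the $n$-periodic subshift, and then to combine this with surjectivity-type arguments on periodic points to pin down the finitely many $n$ for which the implication can be nontrivial.

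Once $n$ ranges over a finite, computably-bounded set, the remaining check is finite: for each such $n$, enumerate the (finitely many) points of $S^\Z$ of least period $n$ — equivalently, the words of length $n$ with least period $n$ — compute $F$ on each (which only requires reading a window of size $2r+1$, cyclically), decide whether the induced map on $Q_n$ is a bijection of $Q_n$ onto itself, and if so verify it is the identity. All of this is a terminating computation. Assembling: $F \in \IDEMP^*$ iff (it is surjective implies it is the identity) and (for every $n$ up to the computed bound $N_0$, if $F$ permutes $Q_n$ then it fixes $Q_n$ pointwise), and both clauses are decidable, so the whole question is. The main obstacle, as noted, is the effective bound $N_0$ on the periods that need to be inspected; everything downstream of that is routine finite enumeration plus the known decidability of surjectivity.
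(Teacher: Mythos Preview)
Your proposal has a real gap at exactly the point you yourself flag as the ``main obstacle'': the computable bound $N_0$ on the periods to inspect. You do not establish it, and the dichotomy you sketch (for $n>N_0$ either $F(Q_n)\neq Q_n$ or $F|_{Q_n}$ is a power of the shift) is false in general---any reversible CA that is not a power of the shift has $F(Q_n)=Q_n$ for every $n$ while $F|_{Q_n}$ is not a shift. A correct argument for the bound must use non-surjectivity of $F$, which your sketch never invokes. The paper's proof sidesteps the whole issue by dovetailing two semi-algorithms: membership in $\IDEMP^*$ is semi-decidable by enumerating finite products of idempotent CA and comparing local rules to $F$; non-membership is semi-decidable via Theorem~\ref{theorem:Characterization}, by testing whether $F$ is surjective and not the identity, and in parallel enumerating $n$ in search of a witness to $F(Q_n)=Q_n$ with $F|_{Q_n}\neq\mbox{id}|_{Q_n}$. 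No bound on $n$ is ever needed.

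Your route can be completed, however. After deciding the right-hand conjunct, if $F$ is surjective you are already done; if $F$ is not surjective, its image is a proper sofic subshift of $S^\Z$, and you can effectively compute a word $v\notin\lang{F(S^\Z)}$ (a length bound for such $v$ follows from the de~Bruijn presentation of the image). For all sufficiently large $n$, with an explicit bound computable from $|v|$ and $|S|$, there is a primitive word of length $n$ containing $v$ as a factor, hence a point of $Q_n$ lying outside $F(S^\Z)$; thus $F(Q_n)\neq Q_n$ and the left-hand implication is vacuous beyond that bound. This supplies the missing $N_0$, after which the finite check you describe is correct.
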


\begin{proof}
Obviously, $F$ being in $\IDEMP^*$ is semi-decidable. On the other hand, if $F$ is not in $\IDEMP^*$, it does not satisfy the characterization of Theorem~\ref{theorem:Characterization}. If $F$ does not satisfy the condition $F(S^\Z) = S^\Z \implies F = \mbox{id}$, the cellular automaton is surjective but not equal to the identity, and since surjectivity and not being equivalent to the identity CA are both semidecidable \cite{AmPa72} \cite{Ka05}, a semialgorithm can detect this. If the condition $\forall n: (F(Q_n) = Q_n \implies F|_{Q_n} = \mbox{id}|_{Q_n})$ is not satisfied, there exists $n$ such that $F(Q_n) = Q_n$, but $F|_{Q_n} \neq \mbox{id}|_{Q_n}$, which is easily found by enumerating the sets $Q_n$.
\end{proof}

However, once restricted to CA in $\IDEMP^*$, we find many undecidable problems, of which we list a few. In \cite{Ka92}, it is shown that nilpotency of cellular automata with a spreading state is undecidable. From this and Proposition~\ref{proposition:SpreadingState}, we obtain the following.

\begin{theorem}
It is undecidable whether $F \in \IDEMP^*$ is nilpotent.
\end{theorem}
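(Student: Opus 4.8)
The plan is to reduce from the nilpotency problem for cellular automata with a spreading state, which is undecidable by \cite{Ka92}, by showing that every instance of that problem can be \emph{effectively} transformed into an equivalent instance in which the cellular automaton satisfies all the hypotheses of Proposition~\ref{proposition:SpreadingState}, and is therefore a product of idempotents. Granting such a transformation, a decision procedure for nilpotency restricted to $\IDEMP^*$ would decide nilpotency for all CA with a spreading state, a contradiction.

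So, let $F : S^\Z \to S^\Z$ be a cellular automaton with spreading state $q$. First I would note that $q$ being spreading forces $F({}^\infty q^\infty) = {}^\infty q^\infty$, and that $F$ restricted to unary points is given by a self-map $h : S \to S$ with $h(q) = q$, computable from the local rule. Since iteration of $h$ on the finite set $S$ is decidable, I would first check whether $h$ is \emph{not} eventually constant; in that case some unary point never reaches ${}^\infty q^\infty$, so $F$ is not nilpotent and we are done without ever invoking the characterization. Otherwise $h^{|S|} \equiv q$, and I would replace $F$ by $F' = F^{|S|}$. A short computation shows that $F'$ again has spreading state $q$, that $F'$ maps \emph{every} unary point to ${}^\infty q^\infty$, and that $F'$ is nilpotent if and only if $F$ is (a nilpotent CA with spreading state $q$ collapses to ${}^\infty q^\infty$, and $F^{kn} = (F^k)^n$).

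It then remains to handle the neighborhood-size requirement. Here I would compute the minimal neighborhood of $F'$: if it has size at most $1$, then $F'(x)_i = \psi(x_{i+j})$ for a computable map $\psi : S \to S$ and a shift $j$, so $F'$ is nilpotent if and only if $\psi$ is eventually constant — decidable outright. If the neighborhood has size at least $2$, then $F'$ meets all three hypotheses of Proposition~\ref{proposition:SpreadingState}, hence $F' \in \IDEMP^*$, and its nilpotency is equivalent to that of $F$. Combining the branches, nilpotency of CA with a spreading state would be decidable, contradicting \cite{Ka92}.

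The main obstacle — and essentially the only content beyond bookkeeping — is reconciling Kari's hard instances with the extra hypotheses of Proposition~\ref{proposition:SpreadingState} (constancy on unary points, neighborhood of size at least $2$). The point to get right is that any instance violating one of these is already transparently decidable for nilpotency, so discarding such instances costs nothing, while the verifications that passing to $F^{|S|}$ preserves the spreading state and the nilpotency status, and that a collapse to a one-cell neighborhood only occurs for CA of the easy kind, are routine.
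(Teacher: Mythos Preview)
Your argument is correct and follows the same route as the paper: reduce from Kari's undecidability of nilpotency for CA with a spreading state via Proposition~\ref{proposition:SpreadingState}. You are in fact more careful than the paper, which simply invokes Kari and the proposition without explicitly dispatching the side hypotheses (neighborhood size $\geq 2$, constancy on unary points); your preprocessing by iteration and your handling of the degenerate one-cell-neighborhood case fill in exactly the details the paper leaves implicit.
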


By attaching a full shift (with shift dynamics) to the state set so that the spreading state also zeroes cells of the full shift, we obtain that computation of entropy up to error $\epsilon > 0$ is uncomputable even for CA with a spreading state \cite{CuHuKa92}. In particular, we obtain that this is also undecidable for CA in $\IDEMP^*$.

\begin{theorem}
Approximating the entropy of $F \in \IDEMP^*$ up to error $\epsilon$ is uncomputable for all $\epsilon > 0$.
\end{theorem}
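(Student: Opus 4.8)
The plan is to reduce the nilpotency problem for cellular automata with a spreading state, shown undecidable in \cite{Ka92}, to the approximation of entropy within $\IDEMP^*$. Fix a Turing machine $\mathcal{M}$ and let $G = G_{\mathcal{M}}$, on some alphabet $S$, be the corresponding CA with spreading state $q$, radius $r \geq 1$ and neighborhood size at least $2$ --- the very CA that, through Proposition~\ref{proposition:SpreadingState}, is used in the preceding theorem, so that in particular $G$ is constant on unary points (every ${}^\infty a^\infty$ is sent to ${}^\infty q^\infty$). Fix an integer $b \geq 2$ with $\log b > 2\epsilon$ and put $B = \{0, 1, \ldots, b - 1\}$. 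I define a CA $F$ on the alphabet $S \times B$ by $F(x, z)_i = (G(x)_i,\, \zeta_i)$, where $\zeta_i = 0$ if $q$ occurs in $x_{[i - r,\, i + r]}$ and $\zeta_i = z_{i + 1}$ otherwise; thus $F$ runs $G$ on the first track and the full-shift dynamics on the second, except that the spreading state $q$ erases the second track wherever it reaches.

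First I would check that $F \in \IDEMP^*$ by verifying the hypotheses of Proposition~\ref{proposition:SpreadingState}. The state $(q, 0)$ is spreading for $F$: wherever $q$ lies on the first track it spreads to the neighboring cells there, and $q$ in the neighborhood forces $0$ on the second track, so those cells become $(q, 0)$. The neighborhood of $F$ has size at least $2$ because $r \geq 1$. And $F$ is constant on unary points: for any $a \in S$ and $c \in B$ the first track of $F({}^\infty (a, c)^\infty)$ is ${}^\infty q^\infty$ (as $G$ is constant on unary points), and this $q$ immediately zeroes the second track, so $F({}^\infty(a, c)^\infty) = {}^\infty (q, 0)^\infty$. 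Hence $F \in \IDEMP^*$.

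Next I would establish the entropy dichotomy. If $G$ is nilpotent, say $G^N \equiv {}^\infty q^\infty$, then at time $N + 1$ every cell of the first track is $q$, hence every cell of the second track is $0$, so $F^{N+1} \equiv {}^\infty (q, 0)^\infty$ and $h(F) = 0$, where $h$ denotes topological entropy. If $G$ is not nilpotent, then --- using the standard fact, provable by a compactness argument, that a non-nilpotent CA with spreading state admits a configuration whose entire forward orbit avoids the spreading state --- there is $y$ with $q \notin \lang{G^n(y)}$ for all $n \geq 0$. Then $F^n(y, z) = (G^n(y),\, \sigma^n z)$ for all $n \geq 0$ and all $z \in B^\Z$, so the closed $F$-invariant set $\Omega = \overline{\{\,(G^n(y), z) : n \geq 0,\ z \in B^\Z\,\}}$ admits the second-coordinate projection as a factor map onto $(B^\Z, \sigma)$; therefore $h(F) \geq h(\sigma|_{B^\Z}) = \log b > 2\epsilon$. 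Since $\mathcal{M} \mapsto F$ is computable, an algorithm outputting a value within $\epsilon$ of $h(F)$ would decide, by comparing that value with $\epsilon$, whether $G_{\mathcal{M}}$ is nilpotent, contradicting \cite{Ka92}; hence no such algorithm exists.

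The entropy estimates are routine once the second-track full-shift factor is identified; the step that needs care is the first one, namely that $F$ --- and not merely $G$ --- lands in $\IDEMP^*$, which rests on the Kari CA $G$ being constant on unary points (sending every unary configuration to ${}^\infty q^\infty$ in a single step). This is exactly the property already invoked in the proof that nilpotency is undecidable within $\IDEMP^*$, and it is precisely what lets the attached second track be zeroed simultaneously, keeping $F$ constant on unary points.
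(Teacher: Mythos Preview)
Your approach is exactly the paper's: attach a full $B$-shift on a second track, let the spreading state of $G$ erase that track, invoke Proposition~\ref{proposition:SpreadingState} to place the resulting CA in $\IDEMP^*$, and then use the entropy dichotomy to reduce entropy approximation to nilpotency of $G$. The paper gives only a one-sentence sketch citing \cite{CuHuKa92}, so your expanded argument is in the right spirit.

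There is, however, a genuine slip in your verification that $F$ is constant on unary points. Your rule zeroes $\zeta_i$ when $q$ occurs in the \emph{input} window $x_{[i-r,i+r]}$, not when the \emph{output} symbol $G(x)_i$ equals $q$. On the unary input ${}^\infty(a,c)^\infty$ with $a\neq q$ the current first track contains no $q$ at all, so $\zeta_i=z_{i+1}=c$ and hence $F({}^\infty(a,c)^\infty)={}^\infty(q,c)^\infty$, which depends on $c$. Thus $F$ is \emph{not} constant on unary points and Proposition~\ref{proposition:SpreadingState} does not apply to your $F$ as written; the sentence ``this $q$ immediately zeroes the second track'' conflates the time-$1$ first track with the time-$0$ first track that your zeroing rule actually inspects.

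The repair is immediate and does not disturb anything else: set $\zeta_i=0$ whenever $G(x)_i=q$ (this already subsumes your original condition, since $q$ spreading means $q\in x_{[i-r,i+r]}$ forces $G(x)_i=q$). Then every unary input maps to ${}^\infty(q,0)^\infty$, the state $(q,0)$ is still spreading, and the entropy half of your argument is untouched, because along an orbit of $y$ in which $q$ never appears one has $G^n(y)_i\neq q$ for all $n,i$, so the second track is never zeroed and still factors onto $(B^\Z,\sigma)$.
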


\section*{Acknowledgements}

I would like to thank Ilkka T\"orm\"a for his idea of also discussing eventually idempotent cellular automata, and for his useful comments on an early version of this article. I would also like to thank Jarkko Kari for pointing out an error in the examples section.

\bibliographystyle{plain}
\bibliography{bib}{}

\end{document}